\DeclareMathOperator{\Canon}{Canon}
\DeclareMathOperator{\Cov}{Cov}
\DeclareMathOperator{\Hi}{Hi}
\DeclareMathOperator{\imbal}{Imb}
\DeclareMathOperator{\LM}{LM}
\DeclareMathOperator{\Left}{Left}
\DeclareMathOperator{\Lo}{Lo}
\DeclareMathOperator{\MC}{MC}
\DeclareMathOperator{\Mid}{Mid}			
\DeclareMathOperator{\Over}{Over}
\DeclareMathOperator{\Right}{Right}
\DeclareMathOperator{\RM}{RM}
\DeclareMathOperator{\LR}{LocReg}
\DeclareMathOperator{\Int}{LocInt}
\DeclareMathOperator{\Under}{Under}
\newcommand{\elo}{<_{\incomp}}
\newcommand{\eloi}{\prec_{\incomp}}
\newcommand{\geqx}{\geq_\textup{pf}}
\newcommand{\rgl}{\mathbin{\gl}}
\newcommand{\gtx}{>_\textup{pf}}
\newcommand{\incomp}{\parallel}
\newcommand{\Leftx}{\textup{Left}'}
\newcommand{\ltx}{<_\textup{pf}}
\newcommand{\leqx}{\leq_\textup{pf}}
\newcommand{\Midx}{\textup{Mid}'}     
\newcommand{\Rightx}{\textup{Right}'}
\newcommand{\rtheta}{\mathbin{\theta}}
\newcommand{\rgr}{\mathbin{\gr}}
\newcommand{\slo}{<_\textup{lr}}   
\newcommand{\rgt}{\mathbin{\gt}}	
\newcommand{\slr}{<_\textup{lr}}
\newcommand{\RR}{\mathbb{R}}
\newcommand{\Tur}{\textup{T}_\textup{UR}}		
\newcommand{\Tul}{\textup{T}_\textup{UL}}		
\newcommand{\preclr}{\prec_\textup{lr}}
\theoremstyle{plain}
\newtheorem{theorem}{Theorem}
\newtheorem{lemma}[theorem]{Lemma}
\newtheorem{corollary}[theorem]{Corollary}
\begin{document}

\title{Zilber's Theorem for planar lattices, revisited}

\author[Kirby A. Baker]{Kirby A. Baker}
\email[K.\,A. Baker]{Kirby Baker <baker@math.ucla.edu>} 

\author{George Gr\"{a}tzer} 
\email[G. Gr\"atzer]{gratzer@me.com}
\urladdr[G. Gr\"atzer]{http://server.maths.umanitoba.ca/homepages/gratzer/}
\dedicatory{To G\'abor Cz\'edli,\\
on his 65th birthday}

\date{December 26, 2019}
\subjclass[2010]{Primary: 06C10. Secondary: 06A07}
\keywords{planar lattice, Zilber's Theorem.}

\begin{abstract}
Zilber's Theorem states that a finite lattice $L$ 
is planar if{}f it has a complementary order relation.
We provide a new proof for this crucial result
and discuss some applications,
including a canonical form for finite planar lattices 
and an analysis of coverings in the left-right order.
\end{abstract}

\maketitle

\section{Introduction}\label{S:intro}

\subsection{Zilber's Theorem}\label{S:Zilber-plus}

While there is a brief discussion of planar orders 
and lattices in G. Birkhoff~\cite{gB48, gB67}, 
the foundation of this field was laid thirty-two years later in
K.\,A. Baker, P.\,C. Fishburn, 
and F.\,S. Roberts \cite{BFR72},
closely followed by David Kelly and Ivan Rival \cite{KR75},
which has become the standard reference.  We revisit this
theory with a focus on Zilber's Theorem, its proof, and new
applications, which include a canonical form for finite planar
lattices and an analysis of coverings in the left-right order.

Several dozen articles have been published on planar lattices
since the publication of the original two papers.
The References include a partial list from 1972 to 2019.
Many basic questions have yet yo be answered; 
we intend to address some of them.

A (finite) ordered set $(P, <)$ is said to have complementary (strict) order relation~$\gl$ if any two distinct elements of $P$ are comparable by exactly one of $<$ and $\gl$.

\begin{theorem}[Zilber's Theorem]\label{T:Zilber}
A finite lattice $L$ is planar
if{}f it has a complementary order relation.
\end{theorem}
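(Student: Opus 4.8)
The plan is to prove the two implications separately; together they give the stated equivalence. Throughout I use that a finite lattice is \emph{bounded}, so its diagram has a unique bottom $0$ and top $1$, and that every element lies on a maximal ($0$--$1$) chain.

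\textbf{From a complementary order relation to planarity.} Suppose $\gl$ is given. Define two relations by
\[
a \ltlambda b \iff a < b \text{ or } a \gl b,\qquad a \ltrho b \iff a < b \text{ or } b \gl a .
\]
The exactly-one hypothesis makes each of $\ltlambda,\ltrho$ total and antisymmetric (a tournament), and I would first check they are in fact linear orders by ruling out $3$-cycles: a cycle using only $<$ or only $\gl$ contradicts transitivity of that relation, while any mixed cycle collapses---two $\gl$-steps force a $\gl$-comparability that clashes with a $<$-step, and symmetrically---so transitivity of $\ltlambda$ and $\ltrho$ is automatic. One then verifies $a<b \iff (a\ltlambda b \text{ and } a\ltrho b)$, so that $<$ is the intersection $\ltlambda\cap\ltrho$. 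Listing the elements in $\ltlambda$-order and using $\ltrho$-rank as the second coordinate yields a dominance drawing; rotating it by $45^\circ$ turns $<$-comparabilities into upward paths. The remaining, and only geometric, task is to show the covering segments do not cross: a crossing would produce four elements whose $\ltlambda$- and $\ltrho$-orders are incompatible with $<$ being $\ltlambda\cap\ltrho$, and here the presence of $0$ and $1$ is what lets me close off the drawing and exclude the configuration.

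\textbf{From planarity to a complementary order relation.} Fix a planar diagram of $L$. Each maximal chain $C$ is a curve from $0$ to $1$ and hence, together with the outer face, separates the diagram into a left and a right part; if $a$ and $b$ are incomparable then $b\notin C$ for any maximal chain $C$ through $a$, so $b$ lies strictly on one side. I would define $a\gl b$ to mean that $b$ lies to the right of such a chain. Two preliminary points must be settled: \emph{well-definedness}, that the side of $b$ does not depend on the chosen maximal chain through $a$ (any element caught between two such chains is comparable to $a$, using meets and joins to reroute the chains), and the exactly-one condition, which is then immediate because $\gl$ and its converse compare precisely the $<$-incomparable pairs.

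\textbf{The crux.} The heart of the argument is transitivity of $\gl$: from $a\gl b$ and $b\gl c$ I must deduce $a\gl c$, the danger being that $c$ ``wraps around'' to the left of every chain through $a$. This is where planarity is used essentially. Since distinct maximal chains meet only at shared vertices, between consecutive shared vertices one lies strictly left of the other; a Jordan-curve analysis of the chains through $a$, $b$, $c$, and of the regions the paper isolates as $\Left$, $\Mid$, and $\Right$, then rules out the wrap-around. I expect this transitivity to be the main obstacle, since it is exactly the global consequence of planarity that the purely order-theoretic axioms cannot supply on their own.
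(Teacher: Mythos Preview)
Your plan matches the paper's proof closely. The paper also proves (ii)$\Rightarrow$(i) by forming the two linear extensions $<\cup\gl$ and $<\cup\tilde\gl$, embedding diagonally into their product, and rotating by $45^\circ$; and it proves (i)$\Rightarrow$(ii) by defining $p\slo q$ via a separating maximal chain, establishing a partition of $L$ at each point into $\Left(p)$, $\Mid(p)$, $\Right(p)$, and reading transitivity off that partition.

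Two places where the paper's execution differs from your sketch are worth noting. First, for non-crossing of covering segments in the dominance drawing, the paper's argument (its Lemma~\ref{L:R2-planarity}) does not use merely the presence of $0$ and $1$: it uses that $L$ is a lattice. A crossing point of $\ol{pq}$ and $\ol{p'q'}$ lies in both rectangles $[p,q]$ and $[p',q']$ of $\RR^2$, forcing $p\jj p'\leq q\mm q'$ in $L$; since these are actual elements of $L$ sitting on both covering segments, the segments must share an endpoint. Your phrase about $\ltlambda,\ltrho$ being ``incompatible'' is not by itself enough---this step genuinely needs the join and meet. Second, for (i)$\Rightarrow$(ii) the paper streamlines both well-definedness and transitivity by first putting a distributive lattice structure on the set $\MC L$ of maximal chains (via $\min$ and $\max$ of their path functions). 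This lets it replace your Jordan-curve analysis by short lattice computations: the ``rerouting'' you mention becomes the map $C\mapsto \max(\LM(p),\min(\RM(p),C))$, and transitivity is read off directly from the partition lemma using $\RM(p)$ as the separating chain. The content is the same as what you outline, but the $\MC L$ device makes the verification considerably cleaner than an ad hoc planarity argument.
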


G. Birkhoff \cite{gB48, gB67} states this theorem as Exercise 7~(c)* in Section II.4. 
This result has immediate interesting ramifications.
Kelly-Rival~\cite{KR75} present
a proof of the ``only if'' implication.

\subsection{A little history}\label{S:history}

Joseph A. Zilber (1929--2009)
received his A.B (1943), A.M. (1946), and Ph.D. (1963) at Harvard. His Ph.D. thesis under
Raoul Bott was entitled ``Categories in Homotopy Theory''.
Earlier, he had published two papers (\cite{EZ53a} and 
\cite{EZ53}) with S. Eilenberg.
 
While a student, 
he probably communicated to Garrett Birkhoff the result 
we refer to as Zilber's Theorem,
when Birkhoff started to collect material for the revised edition of his book \emph{Lattice Theory} ~\cite{gB48}, published in 1948.

Zilber's Theorem appears to have been largely forgotten.
Despite the fact that there have been dozens of publications on planar lattices, we found only three references to~it, namely, 
K.\,A. Baker, P.\,C. Fishburn, 
and F.\,S. Roberts \cite[p. 18]{BFR72},
Kelly-Rival \cite[Proposition 1.7]{KR75}, 
and G.~Cz\'edli and G.~ Gr\"atzer 
\cite[Exercises 3.6 and 3.7]{CG14}.
On the other hand, the Mathematical Reviews 
lists 138 references to the closely related article of
B. Dushnik and E.\,W. Miller \cite{DM41},
see Section~\ref{S:DMv2}.

Kelly-Rival \cite{KR75} and David Kelly \cite{dK87}
take the theory 
further by considering coverings that are determined 
by curves rather than line segments, 
which utilizes additional technical tools. 
The present exposition is restricted to line segments only.

\subsection{Expanded form via the Dushnik-Miller Theorem}\label{S:DMv2}

B. Dushnik and E.\,W. Miller \cite[Theorem 3.61]{DM41} 
preceded Zilber by listing several conditions 
on an order relation that are 
equivalent to its having a complementary relation. 
Zilber's Theorem combined with these 
gives the following expanded form. 
(See also G. Birkhoff \cite{gB48, gB67}, K.\,A. Baker, P.\,C. Fishburn, 
and F.\,S. Roberts \cite{BFR72},
T. Hiraguti \cite{tH51}, \cite{tH55}, and 
Kelly-Rival \cite[Proposition 5.2]{KR75}.)

\begin{theorem}[Zilber's Theorem Plus]
\label{T:Zilber-plus}
For a finite lattice $L$, the following conditions are equivalent.
\begin{enumeratei}
\item $L$ is planar;
\item $L$ has a complementary order relation;
\item $L$ has order dimension at most $2$;
\item $L$ has an order embedding into the direct product of two chains.
\end{enumeratei}
\end{theorem}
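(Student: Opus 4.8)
The plan is to prove Theorem~\ref{T:Zilber-plus} by establishing the cycle of implications
\[
\text{(i)} \Rightarrow \text{(ii)} \Rightarrow \text{(iii)} \Rightarrow \text{(iv)} \Rightarrow \text{(i)},
\]
so that all four conditions become equivalent. The equivalence of (i) and (ii) is exactly Zilber's Theorem (Theorem~\ref{T:Zilber}), which we may invoke directly; this gives both (i)~$\Rightarrow$~(ii) and the final closing implication (iv)~$\Rightarrow$~(i) once we route back to planarity, so the real work lies in connecting the complementary-relation condition to the two order-theoretic conditions (iii) and (iv).

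First I would prove (ii)~$\Rightarrow$~(iii). Suppose $L$ has a complementary strict order relation $\gl$. The defining property is that any two distinct elements are comparable by exactly one of $<$ and $\gl$. The key step is to show that $\gl$ is itself a (strict) partial order, i.e.\ that it is transitive, and that both $<$ and $\gl$ extend to compatible linear orders. Concretely, form two binary relations $L_1$ and $L_2$ by combining $<$ with $\gl$ in the two possible consistent ways (roughly, $a <_1 b$ if $a < b$ or $a \gl b$, and $a <_2 b$ if $a < b$ or $b \gl a$). The crux is checking that $L_1$ and $L_2$ are linear extensions of $<$ whose intersection is exactly $<$: comparability by exactly one of $<,\gl$ guarantees totality of each $L_i$, while $a < b$ forces $a <_i b$ in both, and any incomparable pair is separated oppositely by the two orders. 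Transitivity of each $L_i$ is the point requiring care. Having two linear extensions whose intersection is $<$ is precisely the statement that the order dimension of $L$ is at most $2$, giving (iii).

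Next, (iii)~$\Rightarrow$~(iv) is essentially a restatement: if $<$ is the intersection of two linear extensions $L_1, L_2$, then the map $x \mapsto (r_1(x), r_2(x))$, where $r_i$ records the rank (position) of $x$ in the chain $L_i$, is an order embedding of $L$ into the product of the two chains. One verifies $x < y$ in $L$ if{}f $x$ precedes $y$ in both $L_1$ and $L_2$, which is exactly componentwise comparability in the product; this is routine. Finally, (iv)~$\Rightarrow$~(ii): given an embedding into $C_1 \times C_2$, declare $x \gl y$ when $x,y$ are incomparable in $L$ but comparable in a fixed one of the coordinates (say $x$ below $y$ in the first coordinate and above in the second). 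Incomparable pairs in a product of chains are always separated this way, so $\gl$ is a well-defined complementary relation; transitivity follows from the product structure. Then (ii)~$\Rightarrow$~(i) via Zilber's Theorem closes the loop.

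The main obstacle I anticipate is the transitivity verification inside (ii)~$\Rightarrow$~(iii): showing that the two candidate linear extensions built from $<$ and $\gl$ are genuinely transitive. A complementary relation by itself only prescribes the behavior of \emph{pairs}, and nothing in the definition immediately forbids a $3$-cycle in $L_1$ of the form $a <_1 b <_1 c$ with $c <_1 a$, where the comparabilities come from a mixture of $<$ and $\gl$. Ruling this out is where one must actually use that $<$ is the order of a \emph{lattice} (or at least exploit the Dushnik--Miller characterization), rather than an arbitrary order; equivalently, one leans on the fact that $\gl$ is forced to be transitive. I expect this to be the step where the genuine content of the Dushnik--Miller result is needed, and I would isolate it as the central lemma before assembling the four-way equivalence.
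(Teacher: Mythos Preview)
Your cycle (i)$\Rightarrow$(ii)$\Rightarrow$(iii)$\Rightarrow$(iv)$\Rightarrow$(i) matches the paper's, and your handling of (ii)$\Rightarrow$(iii) and (iii)$\Rightarrow$(iv) is essentially what the paper does (Lemmas~\ref{L:ii-to-iii} and~\ref{L:iii-to-iv}). There are, however, two genuine problems.

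\textbf{Circularity at the endpoints.} You propose to invoke Zilber's Theorem (Theorem~\ref{T:Zilber}) as a black box for (i)$\Rightarrow$(ii) and, more critically, for the closing step (ii)$\Rightarrow$(i). In this paper Zilber's Theorem is not proved independently; it is obtained \emph{from} Theorem~\ref{T:Zilber-plus}. Only the forward direction (i)$\Rightarrow$(ii) is established beforehand (Theorem~\ref{T:complementary}), so that half of your appeal is legitimate. But the converse (ii)$\Rightarrow$(i) is exactly what the chain (ii)$\Rightarrow$(iii)$\Rightarrow$(iv)$\Rightarrow$(i) is supposed to supply, and you cannot close the loop by quoting it. The paper instead proves (iv)$\Rightarrow$(i) constructively: embed the two chains into $\RR$, view $L$ inside $\RR^2$ with the product order, and apply the scaled rotation $\gb$ of Lemma~\ref{L:R2-product-rot} to obtain an honest planar lattice diagram. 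You need some such direct construction; routing through (iv)$\Rightarrow$(ii)$\Rightarrow$(i) does not escape the circularity.

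\textbf{Misplaced difficulty in (ii)$\Rightarrow$(iii).} You anticipate that ruling out a mixed $3$-cycle in $<\cup\,\gl$ will require the lattice structure of $L$ or ``the genuine content of the Dushnik--Miller result.'' It does not: Lemma~\ref{L:ii-to-iii} holds for \emph{any} set carrying two complementary strict orders. The paper's argument first checks antisymmetry of $\theta = {<}\cup{\gl}$ directly from complementarity, and then derives transitivity by observing that in a would-be triangle $p\,\theta\,q\,\theta\,r\,\theta\,p$ two consecutive sides must come from the same order, yielding a contradiction via that order's transitivity and the already-established antisymmetry of $\theta$. No joins or meets appear. So your ``central lemma'' is easier than you expect, while the step you treat as routine, (iv)$\Rightarrow$(i), is where actual work (the geometric construction) is needed.
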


Conditions (ii) and (iii) are those of Dushnik and Miller, while (iv) is a rewording of the Dushnik-Miller condition that $L$ is isomorphic to a lattice of intervals of a chain. The Dushnik-Miller conditions apply more generally to ordered sets but are stated here for lattices only, the context of Zilber's Theorem. The proof of Zilber's Theorem Plus (in Section~\ref{S:Zilber-proof}) is also a convenient route to the proof of Zilber's Theorem itself. It may be that any proof of Zilber's Theorem would involve the same conditions in some form.

\subsection{Left-right in planar lattices}\label{S:Left}
The theory of planar lattices 
is a mixture of lattice theory and geometry.
For $a \incomp b \in L$ (incomparable elements $a, b$)
of a planar lattice diagram $L$, we have to define 
the geometric concept of when $a$ is to the left of $b$.

We use the following geometric paradigm. 
In a planar lattice diagram $L$, each maximal chain $C$ has a path formed by the covering line segments in $C$, and this path divides $L$ into two halves overlapping at $C$, the left half and the right half.
Then for $\ell \incomp r \in L$, 
we define $\ell$ to be to the left of $r$,
if $\ell$ is on the left and $r$ 
is on the right side of some maximal chain $C$;
see Figure~\ref{F:left-right} for the three possibilities.

\begin{figure}[htb]
\centerline{\includegraphics{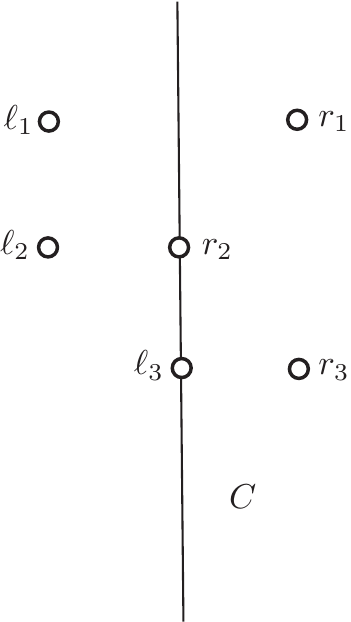}}
\caption{$\ell_i$ is to the left of $r_i$ for $i = 1,2,3$}
\label{F:left-right}
\end{figure}

\subsection{Outline}\label{S:Outline}

In Section~\ref{S:diagrams}, 
we define the basic diagram concept.
Section~\ref{S:Maximal} introduces
the distributive lattice of maximal chains of a planar lattice diagram.
Section~\ref{S:complementary} presents our approach to the construction of the complementary order in Zilber's Theorem Plus.
Section~\ref{S:R2} looks at $\RR^2$
with various orders.

Zilber's Theorem Plus is verified in Section~\ref{S:Zilber-proof}.
In Section~\ref{S:canonical},
we introduce the \emph{canonical form} of a 
planar lattice diagram, with its desirable properties.
Sections~\ref{S:left-right-coverings} and~\ref{S:traversals} give a more detailed picture of conditions (ii) and (iii) of Zilber's Theorem Plus. Section~\ref{S:left} discusses alternative constructions for the complementary order,
including that of Kelly-Rival \cite{KR75}. 
In~Section~\ref{S:Quotient} we
apply Zilber's Theorem to prove that 
quotients of planar lattices are also planar.

\subsection{Terminology and notation}\label{SS:Terminology}
In most of the literature, a ``planar lattice'' is a lattice that has a planar diagram, but there is not necessarily a formal distinction between the lattice and the diagram. In the present theory it is necessary to consider diagrams as formal objects in themselves.
G. Cz\'edli and G. Gr\"atzer \cite{CG14}
follow this route, as do Kelly-Rival \cite{KR75}. 
(For more recent papers utilizing
this distinction, see 
G. Cz\'edli~\cite{gC16}, \cite{gC16a},
\cite{gC17}, and 
G. Cz\'edli, T. D\'ek\'any, G. Gyenizse, and J.~Kulin \cite{CDGK14}.)

Because this paper is concerned with lattices that have a complementary order relation, 
an additional useful formalism is the following.  
An \emph{oriented planar lattice} 
is a triple $(L, <, \gl )$,
where $(L, < )$  is a planar lattice and $\gl$ is a
a specified order relation $\gl$ complementary to $<$.  

In this paper, by an order we mean a \emph{strict} partial ordering, as an irreflexive and transitive relation $<$ 
(which is then necessarily antisymmetric).  
Of course, we can still write $a \leq b$ as needed.  
In the following, we emphasize strictness 
where needed for clarity.
We write $a \sim b$ to indicate that $a$ and $b$ 
are comparable (or equal), 
$a \incomp b$ to indicate incomparability, 
and $a \prec b$ or $b \succ a$ to indicate that $b$ covers $a$.

For basic concepts and notation, we refer to the books \cite{LTF} and \cite{CFL2}.

\subsection*{Acknowlegement}
The authors thank the referee for valuable comments.


\section{Diagrams}\label{S:diagrams}

Let $\RR$ be the ordered set of real numbers 
and let $\RR^2$ be the real plane coordinatized 
by the $x$- and $y$-axes. 
For $u \in \RR^2$, we use the notation $(u_x, u_y)$.

A \emph{diagram} is a finite ordered set $H$ whose elements are distinct points in~$\RR^2$, such that for each covering $p \prec q$ in $H$, 
we have $p_y < q_y$ (the \emph{$y$-isotone} property)
and the line segment $\ol{pq}$ intersects $H$ only in 
the points $p$ and $q$ (the \emph{noninterference} property). 
We write $\Cov H$ for the set of such 
covering line segments  associated with $H$. 
Recall that the order in $H$ 
can be recovered from~$\Cov H$ 
(see, for instance, \cite[Lemma 1]{LTF}).
If $H$ is a lattice, we call $H$ a \emph{lattice diagram}.

The diagram $H$ is \emph{planar} 
if any two distinct segments in $\Cov H$ 
are disjoint or
intersect only at their endpoints. 
A \emph{planar lattice} $L$ 
is a lattice that has a planar lattice diagram $H$, in the sense that $H$ as an ordered set is a lattice isomorphic
to~$L$.

Planar lattice diagrams can have some fragility and some diagrams are ``better drawn'' than others. 
For instance, the planar lattice diagram $L_A$ in Figure~\ref{F:LA} does not have the two properties
we are going to describe now.

A sublattice $S$ of a planar lattice diagram~$L$ 
must again be a planar lattice, 
at least via a diagram with new points 
(see Theorem~\ref{T:planar}),
but we may ask whether $S$ has 
the planarity property ``in place'',
in the sense that line segments directly connecting 
coverings in $S$ intersect only at their end points.
Thus for a planar lattice diagram $L$, 
we define the following property.

\begin{enumerate} 
\item[(Sub)]
For any sublattice $S$ of $L$, reconnecting coverings in $S$ with the appropriate segments 
results in a planar lattice diagram of $S$. 
\end{enumerate}
For example, in $L_A$ of Figure~\ref{F:LA}, the sublattice $S = L_A  -  \set s$ with reconnection $\ol{rw}$
violates the planarity property (Sub).

A planar lattice diagram $L$ is \emph{well drawn} if whenever $p$ is to the left of $q$ in the complementary left-right order, then $p_x < q_x$. 
(That is, the diagram has the left-right $x$-isotone property, in addition to the required $y$-isotone property.) 
In the diagram~$L_A$, the points $w$ and $r$ are on the right boundary but other points have greater $x$-coordinate values.
In contrast, the planar lattice diagram $L_B$ in Figure~\ref{F:LB} is isomorphic to~$L_A$, 
has property (Sub), and is well drawn.
In Section~\ref{S:canonical} we present an algorithmic way to construct diagrams with these properties.

\begin{figure}[htb]
\centerline{\includegraphics{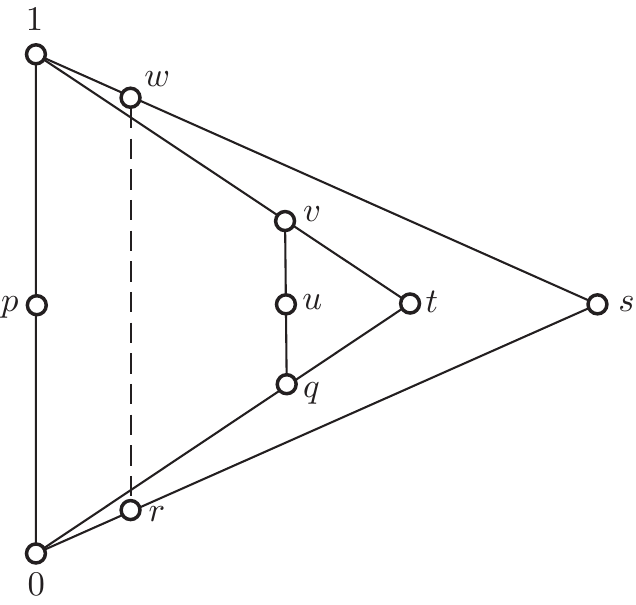}}
\caption{The planar lattice diagram $L_A$}\label{F:LA}
\end{figure}

\begin{figure}[htb]
\centerline{\includegraphics{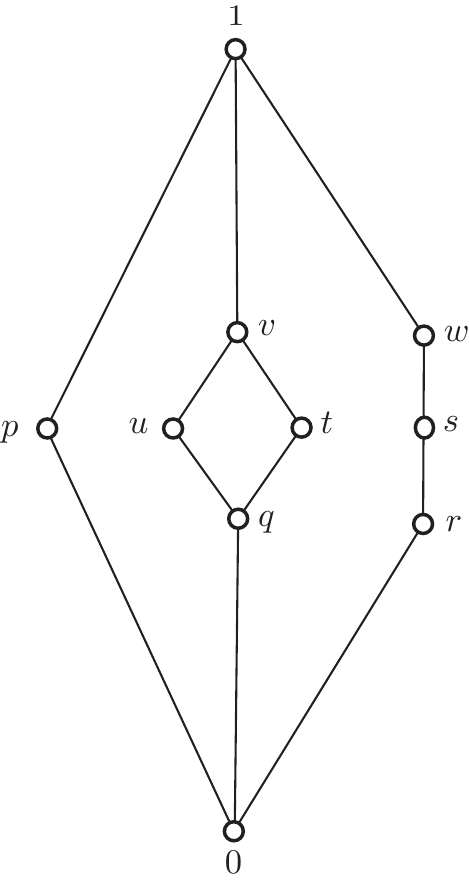}}
\caption{The planar lattice diagram $L_B$, isomorphic to $L_A$}
\label{F:LB}
\end{figure}


\section{The distributive lattice of maximal chains} \label{S:Maximal}

We shall examine left-right comparisons of objects 
within a planar lattice diagram;
we compare maximal chains, elements, and various subsets. 
A basic distinction is whether such a comparison 
can be made directly 
by looking at $x$-coordinates of~points 
(as is the case in comparing two maximal chains 
via their paths) 
or only indirectly 
(as is the case in comparing two lattice elements). 
We begin by introducing
the distributive lattice of maximal chains.

Let $L$ be a planar lattice diagram. 
Let $\MC L$ denote the set of maximal chains of~$L$.
As in Kelly-Rival \cite[p. 641]{KR75},
for $C \in \MC L$, we define the \emph{path function} $f_C$, whose graph $x = f_C(y)$ is
the path along the covering segments of $C$; 
the common domain of all such functions 
is the real interval 
determined by the placement of the diagram in~$\RR^2$. 
We compare members of $\MC L$ via their path functions, writing $C_1 \leq C_2$ if $f_{C_1} \leq f_{C_2}$ 
as functions on a real interval. 
Thus $\MC L$ is an ordered set.

\begin{lemma}
$\MC L$ is a distributive lattice 
with the lattice operations 
determined by the $\max$ and $\min$
of path functions, 
$\max(C_1, C_2)$ \lp the join\rp and $\min(C_1, C_2)$  
\lp the meet\rp .
\end{lemma}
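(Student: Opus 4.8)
The plan is to show that $\MC L$, ordered by pointwise comparison of path functions, is a sublattice of the lattice of all real-valued functions on the common $y$-interval $[0_y, 1_y]$ under the pointwise order, with join and meet computed as pointwise $\max$ and $\min$. Since that function lattice is a direct power of the chain $\RR$ and hence distributive, and a sublattice of a distributive lattice is distributive, the distributivity asserted in the lemma will follow automatically once closure under $\max$ and $\min$ is verified. Thus the whole content reduces to one geometric claim: for maximal chains $C_1, C_2$, the functions $\max(f_{C_1}, f_{C_2})$ and $\min(f_{C_1}, f_{C_2})$ are again path functions of maximal chains of $L$.

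To prove this claim I would first note that, by $y$-isotonicity, the bottom $0$ and top $1$ of $L$ are the lowest and highest points of the diagram, so every maximal chain runs from $0$ to $1$ and every path function has the same domain $[0_y, 1_y]$; in particular $f_{C_1}$ and $f_{C_2}$ agree at the endpoints. I then partition $[0_y, 1_y]$ using the heights of the common elements in $C_1 \cap C_2$. Between two consecutive common elements $a$ and $b$ (no element of $C_1 \cap C_2$ strictly between them), $C_1$ and $C_2$ follow saturated subpaths $P_1, P_2$ from $a$ to $b$ sharing only the endpoints $a, b$. The crucial sub-step is that $P_1$ and $P_2$ are disjoint on the open interval $(a_y, b_y)$: a crossing at an interior non-vertex point is forbidden by planarity (distinct covering segments meet only at shared endpoints), and a vertex of one subpath lying in the interior of a segment of the other is forbidden by noninterference. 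Hence $f_{C_1} - f_{C_2}$ is continuous and nowhere zero on $(a_y, b_y)$, so it has constant sign there, and one subpath lies strictly to the right of the other throughout that region.

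Granting this, $\max(f_{C_1}, f_{C_2})$ is obtained by selecting on each region whichever of $P_1, P_2$ lies on the right, and $\min$ by selecting the left one. Each selected piece is a saturated chain of coverings from $a$ to $b$, and across the common elements these pieces concatenate into a single saturated chain from $0$ to $1$, that is, a maximal chain $C$ of $L$ whose path function is exactly $\max(f_{C_1}, f_{C_2})$; the argument for $\min$ is symmetric. This establishes closure. Finally, since the order on $\MC L$ is the restriction of the pointwise order and $\max(f_{C_1}, f_{C_2})$ is a path function dominating both $f_{C_1}$ and $f_{C_2}$, any common upper bound dominates it pointwise, so it is the least upper bound in $\MC L$; thus $\max$ gives the join, and dually $\min$ gives the meet.

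I expect the geometric disjointness step to be the main obstacle, since it is precisely where planarity and noninterference must be invoked rather than mere lattice-theoretic bookkeeping: everything else is either the universal property of $\max$ and $\min$ or the standard distributivity of a power of $\RR$. A minor point to dispatch along the way is that $C \mapsto f_C$ is injective, so that pointwise comparison really is a partial order on $\MC L$: by noninterference the only points of $L$ on the curve $x = f_C(y)$ are its vertices, namely the elements of $C$, so equal path functions force equal chains.
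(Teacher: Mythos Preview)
Your approach is essentially the paper's own: reduce everything to showing that the set of path functions is closed under pointwise $\max$ and $\min$, hence a sublattice of the distributive lattice of real-valued functions on $[0_y,1_y]$. The paper's proof is terser but follows the same ``patch between consecutive overlap points'' idea.

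One small technical point deserves attention. You partition $[0_y,1_y]$ by the heights of elements of $C_1\cap C_2$ and then assert that between consecutive common elements $a,b$ the subpaths $P_1,P_2$ are disjoint on $(a_y,b_y)$, so that $f_{C_1}-f_{C_2}$ has constant sign there. This fails in the degenerate case where $a\prec b$ is a covering segment used by \emph{both} chains: then $P_1=P_2$ and the difference is identically zero on $(a_y,b_y)$. The paper handles this by noting explicitly that the two paths ``can overlap only at elements of $L$ and on covering segments they share''; your dichotomy should read ``either the subpaths coincide (a shared covering edge) or they are disjoint on $(a_y,b_y)$.'' The patching conclusion is unaffected, since in the degenerate case $\max=\min$ is just that shared edge, but as written your constant-sign argument is not literally true. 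Otherwise your added remarks on injectivity of $C\mapsto f_C$ and on why pointwise $\max$ really is the join in $\MC L$ are correct and make explicit points the paper leaves implicit.
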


\begin{proof}
It is sufficient to show that the path functions of members of $\MC L$ form
a lattice of functions under the operations of functional $\max$ and $\min$. 
We verify that the set of path functions is closed under these operations.
For two maximal chains $C_1, C_2 \in \MC L$, their path functions may overlap,
but from planarity we know that they can overlap only at elements of $L$ and 
on covering segments they share. 
Therefore, we may take the $\max$ and $\min$ 
of the two path segments between consecutive overlap points 
and patch them together to form two new maximal chains, 
whose path functions consist of one 
having all the mins and the other all the maxes. 
These are the functional $\max$ and $\min$ of~ $C_1$ and $C_2$.
Thus the set of path functions of members of $\MC L$ 
is a sublattice of the distributive lattice 
of all real-valued functions on the appropriate real interval, 
under $\max$ and $\min$.
\end{proof}

Observe that, as in Kelly-Rival \cite[p. 641]{KR75}, 
we use the geometric idea that the two path functions together bound a region (perhaps with multiple components),
with the $\min$ and $\max$ of the functions defining the left and right boundaries.

We can put $\MC L$ to immediate use as follows.
For $p \in L$, the \emph{leftmost maximal chain} at $p$ is 
\begin{align*}
 \LM(p) &= \min\setm{C \in \MC L}{ p \in C},\\
   \intertext{while the \emph{rightmost maximal chain} 
      at $p$ is }
   \RM(p) &= \max\setm{C \in \MC L}{p \in C}.
\end{align*}

\section{The complementary order} 
\label{S:complementary}

Let $L$ be a planar lattice diagram. 
We define an ordering $p \slo q$
of incomparable pairs of elements
that will be the complementary order in Zilber's Theorem
(where ``lr'' in the subscript stands for ``left-right'').
We start with path functions 
as a direct method of left-right comparison 
of elements versus maximal chains
and then develop left-right comparison 
of pairs of elements indirectly. 

For $p = (p_x, p_y) \in L$ and $C \in \MC L$, 
let us say that  
\emph{$p$ is \lp weakly\rp to the left of $C$}, 
in symbols, $p \leqx C$, if $p_x \leq f_C(p_y)$
(where ``pf'' in the subscript
 stands for ``path function''). 
The relations $p \ltx C$, $p \geqx C$, 
$C \leqx p$, and so on, are defined similarly. 

Now to compare two elements $p, q \in L$,
let us write $p \slo q$ if{}f 
the following two conditions hold.
\begin{enumeratei}
\item $p \incomp q$;
\item $p$ and $q$ are separated via the path function
of a maximal chain $C$, 
that is, $p \leqx C \leqx q$.
\end{enumeratei}

The relation $\slo$ will be shown in Theorem~\ref{T:complementary} to be the desired complementary order.  (Other, equivalent definitions of $p \slo q$ are possible, as in~Kelly-Rival~\cite{KR75}, but this
choice, with only a mild condition on the separating maximal chain, is especially easy to apply.)

Our proof of Theorem~\ref{T:complementary} is organized in ``local to global'' fashion.
For a planar lattice diagram $L$, 
if we already have a~complementary left-right order on $L$, then at each element~$p$ we could partition $L$ into several regions: elements strictly to the right of $p$, elements strictly to the left of $p$, and elements comparable to $p$.  
Instead, we proceed in the opposite direction, by first showing the existence of such a partition at each $p$, as
illustrated in~Figure~\ref{F:partition}, and then using this fact to prove the theorem. This framework
provides an efficient way of organizing the facets of reasoning needed to verify the theorem.

\begin{lemma}\label{L:partition}
Let $L$ be a planar lattice diagram and let $p \in L$.
Then $L$ is partitioned into three pairwise 
disjoint regions, 
each with two equivalent descriptions, which we may
call a ``left-right'' description and a counterpart ``path'' description:
\begin{alignat*}{2}
    \Left(p)  &= \setm{q}{q \slo p} ={}\setm{q}{q \ltx \LM(p)},\\
    \Right(p) &= \setm{q}{p \slo q} ={}\setm{q}{q \gtx \RM(p)},\\ 
    \Mid(p)  &=\setm{q}{p \sim q}\hspace{5pt} = \setm{q}{\LM(p) \leqx q \leqx \RM(p)}.
\end{alignat*}
Further, the elements of $\Left(p)$ 
are incomparable with the elements of $\Right(p)$.
\end{lemma}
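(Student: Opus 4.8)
The plan is to show that the \emph{path} description and the \emph{left-right} description of each of the three regions coincide. A preliminary remark removes most of the bookkeeping: since $\LM(p)$ and $\RM(p)$ both pass through $p$ we have $\LM(p) \le \RM(p)$, so $f_{\LM(p)}(q_y) \le f_{\RM(p)}(q_y)$ at every height, and for each $q$ exactly one of $q_x < f_{\LM(p)}(q_y)$, $f_{\LM(p)}(q_y) \le q_x \le f_{\RM(p)}(q_y)$, $f_{\RM(p)}(q_y) < q_x$ holds. Thus the three path sets automatically partition $L$, and it suffices to prove the three equivalences $q \sim p \Leftrightarrow \LM(p) \leqx q \leqx \RM(p)$, $q \slo p \Leftrightarrow q \ltx \LM(p)$, and $p \slo q \Leftrightarrow q \gtx \RM(p)$; disjointness, exhaustiveness, and the final incomparability clause then follow. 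The one geometric tool I would isolate first is a \emph{crossing lemma}: if the paths of two maximal chains $C, D$ satisfy $f_C \le f_D$ at one height and $f_C \ge f_D$ at another, then by the intermediate value theorem their paths meet, and by planarity together with the noninterference property (a lattice point cannot lie in the interior of a segment of $\Cov L$) the meeting point is a common element of $C$ and $D$ lying between the two heights.

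The heart of the argument, and the step I expect to be the main obstacle, is the nontrivial implication $\LM(p) \leqx q \leqx \RM(p) \Rightarrow q \sim p$ (its converse is immediate, since any maximal chain through $p$ and $q$ lies between $\LM(p)$ and $\RM(p)$). For this I would argue with the \emph{lens} bounded by the two paths, which pinches to the single point $p$ at height $p_y$, where $f_{\LM(p)} = f_{\RM(p)} = p_x$. Assume $q_y > p_y$, so $q$ lies in the upper half of the lens (the case $q_y < p_y$ is order-dual and gives $q < p$, while $q_y = p_y$ forces $q = p$). Pick any maximal chain $D$ through $q$. At height $q_y$ the path of $D$ lies weakly between $\LM(p)$ and $\RM(p)$; at height $p_y$ it passes through $p$, lies strictly left of $p$, or lies strictly right of $p$. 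In the first case $p \in D$ and we are done; in the second, $D$ is strictly left of $\LM(p)$ at $p_y$ and weakly right of it at $q_y$, so the crossing lemma yields a common element $u \in D$ on $\LM(p)$ with $p_y < u_y \le q_y$, whence $p < u$ (on $\LM(p)$, above $p$) and $u \le q$ (on $D$, below $q$), giving $p < q$; the third case is symmetric with $\RM(p)$. This establishes the $\Mid$ equivalence and, with the trichotomy, identifies $\Left(p) \cup \Right(p)$ with the set of elements incomparable to $p$.

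With the $\Mid$ equivalence available, the $\Left$ and $\Right$ equivalences split into an easy inclusion and an antisymmetry. If $q \ltx \LM(p)$, then $q \incomp p$ by the $\Mid$ equivalence and $\LM(p)$ itself witnesses $q \leqx \LM(p) \leqx p$, so $q \slo p$; dually $q \gtx \RM(p)$ gives $p \slo q$. For the reverse inclusions I must exclude a separated element lying on the wrong side, i.e.\ show $\slo$ is antisymmetric. Here I would use the observation that $z \leqx C$ forces $\LM(z) \le C$ globally — because $\min(C, \LM(z))$ still passes through $z$, hence equals $\LM(z)$ — and dually that $C \leqx z$ forces $C \le \RM(z)$. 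Applying this to the two separating chains of a hypothetical pair $q \slo p$, $p \slo q$ yields $\LM(q) \leqx p \leqx \RM(q)$, so $p \sim q$ by the $\Mid$ equivalence, a contradiction. Antisymmetry upgrades the easy inclusions to equalities and, with the trichotomy, gives pairwise disjointness of the three regions. Finally, for the closing clause let $\ell \ltx \LM(p)$, $r \gtx \RM(p)$, and suppose $\ell \sim r$; say $\ell < r$ (the case $\ell > r$ is symmetric, using $\RM(p)$). The connecting chain is strictly left of $\LM(p)$ at $\ell_y$ and strictly right of it at $r_y$, so by the crossing lemma it meets $\LM(p)$ in an element $v$ with $\ell \le v \le r$ and $v \sim p$; if $v \le p$ then $\ell \le p$, and if $v > p$ then $p < r$, each contradicting the incomparability of $\ell$ or $r$ with $p$. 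The only delicate point throughout is tracking strict versus weak inequalities when invoking the crossing lemma; all the genuine geometry sits in the lens argument.
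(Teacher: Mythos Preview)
Your proof is correct. The overall architecture matches the paper's: both first observe that the three path descriptions partition $L$, then establish the $\Mid$ equivalence as the key step, and then derive the $\Left$/$\Right$ equivalences and the final incomparability from it. Your observation that $z \leqx C$ forces $\LM(z)\le C$ (via $\min(C,\LM(z))$ still containing $z$) is exactly the paper's claims (A)/(B), proved the same way, and your antisymmetry route to $\Left(p)\subseteq\{q:q\ltx\LM(p)\}$ is a repackaging of how the paper uses (A) directly.

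The one genuinely different move is the proof of $\LM(p)\leqx q\leqx\RM(p)\Rightarrow q\sim p$. The paper stays inside the distributive lattice $\MC L$: given any maximal chain $C$ through $q$, it forms the median $C'=\max(\LM(p),\min(\RM(p),C))$, which still contains $q$ but is squeezed into the lens and therefore also contains $p$. You instead argue geometrically with your crossing lemma: a chain $D$ through $q$ that misses $p$ must cross one of $\LM(p),\RM(p)$ between heights $p_y$ and $q_y$, and the crossing point supplies an element witnessing $p<q$ (or dually). Both are short; the paper's median trick is a one-line application of the interval retraction $x\mapsto a\vee(b\wedge x)$ in $\MC L$ and uses the lattice of maximal chains uniformly, while your crossing argument is more hands-on and would work even without knowing $\MC L$ is a lattice (though you do use $\min$ in $\MC L$ elsewhere). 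Likewise, for the final incomparability clause the paper simply notes that any maximal chain lies entirely in $\Left(p)\cup\Mid(p)$ or in $\Right(p)\cup\Mid(p)$ by (A)/(B), whereas you run the crossing lemma once more; both are fine.
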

 
\begin{figure}[thb]
\centerline{\includegraphics{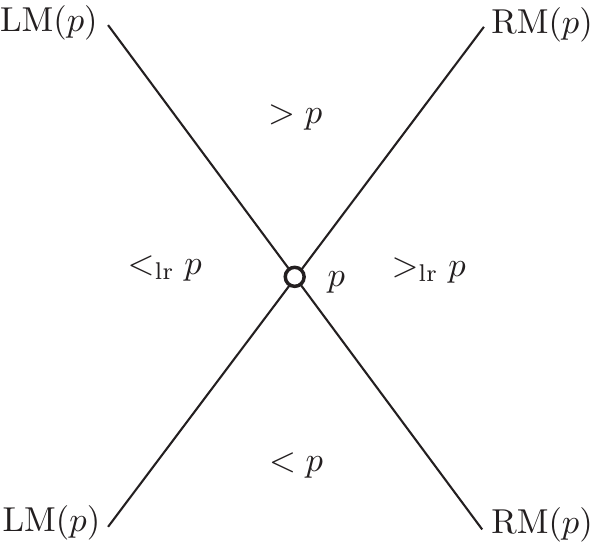}}
\caption{Visualizing the partition at $p$.  
There is a left region, a~right region, and a
middle region consisting of all elements comparable to $p$.  The maximal chains $\LM(p)$ and $\RM(p)$
are contained in this middle region.}
\label{F:partition}
\end{figure}

\begin{proof}
For purposes of the proof, let us consider that $\Left(p)$, 
$\Right(p)$, and $\Mid(p)$ are defined by their left-right descriptions, while the path descriptions define  
$\Leftx(p)$, $\Rightx(p)$, and $\Midx(p)$, respectively,
each of whose equality with its counterpart left-right description remains to be proved.  As a first step, observe that $\Leftx(p)$, $\Rightx(p)$, and $\Midx(p)$ form a partition by their definitions.

In this proof the following two claims will be useful.  
For $C \in \MC L$,
\begin{enumerate}[(A)]
\item if $C \leqx p$, then $C \leq \RM(p)$, or equivalently, $C \ci \Leftx(p) \cup \Midx(p)$;
\item if $C \geqx p$, then $C \geq \LM(p)$, or equivalently, $C \ci \Rightx(p) \cup \Midx(p)$.
\end{enumerate}
(If $p \in C$, then (A) and (B) both apply.)
Indeed, if $C \leqx p$, set $C' = \max(C, \RM(p))$ in $\MC L$; then $p \in C'$
so $C' \leq \RM(p)$, giving $C \leq C' \leq \RM(p)$ in $\MC L$, which is the same as
$C \ci \Leftx(p) \cup \Midx(p)$.  If  $C \geqx p$, a similar argument applies.

Next, to show that $\Midx(p) \ci \Mid(p)$, 
suppose that we have an element $q$ with 
\[
   \LM(p) \leqx q \leqx \RM(p).
\]  
Let $C$ be any maximal chain through $q$ and let
\[
   C' = \max(\LM(p), \min(\RM(p), C)),
\]  
which still has $q$ as an element 
but also satisfies \[\LM(p) \leqx C' \leqx \RM(p).\]
Since $\LM(p)$ and $\RM(p)$ touch at $p$,
we have $p \in C'$ as well, whence $p$ and $q$ are comparable. 
(Here we have applied a familiar lattice-theoretic idea to
the lattice $\MC L$: 
if $a \leq b$ in a lattice, then $x \mapsto a \jj (b \mm x)$ is an isotone map 
of the lattice onto the interval $[a, b]$.)  For the opposite inclusion $\Mid(p) \ci \Midx(p)$, 
if we have $q \sim p$,
then $q$ lies on some maximal chain $C$ through $p$ and hence $\LM(p) \leq C \leq \RM(p)$, giving
$q \in \Midx(p)$.  From the two inclusions 
we obtain that $\Midx(p) = \Mid(p)$.

To show that $\Leftx(p) \ci \Left(p)$, 
let us assume that $q \ltx \LM(p)$.  To attain $q \in \Left(p)$, or equivalently $q \slo p$,
by definition we need to verify the two conditions 
\begin{enumeratei}
\item $q \incomp p$,
\item  $q \leqx C \leqx p$ for some $C \in \MC L$.
\end{enumeratei}

\emph{Ad} (i), 
observe that the opposite, 
$q \sim p$, would imply that $q$ and $p$ 
are on a common maximal chain $C$, 
in which case we would have $C \geq \LM(p)$ 
and hence $q \geqx \LM(p)$, contrary to assumption.
\emph{Ad} (ii), we have $q \leqx C \leqx p$ with $C = \LM(p)$.
Thus $q \in \Left(p)$.  

For the opposite inclusion $\Left(p) \ci \Leftx(p)$, 
let $q \in \Left(p)$.  
We have just assumed that $q \in \Left(p)$, 
which by definition means that $q \slo p$, 
which in turn by definition means that (i) and (ii) hold.  
From (ii), the claim~(A) gives $q \leqx C \leq \RM(p)$,
whence $q \in \Leftx(p) \cup \Midx(p)$, 
while by (i) we have $q \notin \Mid(p)$, 
which we already know equals $\Midx(p)$;
therefore, $q \in \Leftx(p)$ as desired.  
From the two inclusions, 
we obtain that $\Leftx(p) = \Left(p)$.  
A symmetrical argument yields $\Rightx(p) = \Right(p)$.

For incomparability between elements of $\Left(p)$ and elements of $\Right(p)$, consider any two
comparable elements $q, q' \in L$ and let $C$ be a maximal chain including both.  By claims (A) and (B),
\[q, q' \in C \ci \Left(p) \cup \Mid(p)\] or \[q, q' \in C \ci \Right(p) \cup \Mid(p)\] (or both).
In neither case can one of $q, q'$ be  in $\Left(p)$ and the other in $\Right(p)$.
\end{proof}

Now we are ready to assert the ``only if'' direction of Zilber's Theorem.

\begin{theorem}\label{T:complementary}
Let $L$ be a planar lattice diagram.
The relation $\slo$ on $L$ is an order relation,  complementary to the order relation $<$ on $L$.
\end{theorem}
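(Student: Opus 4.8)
The plan is to derive everything from Lemma~\ref{L:partition}, which already packages the hard geometric content into the partition at each point together with its incomparability clause. A strict order relation is one that is irreflexive and transitive (asymmetry then being automatic), so I would establish these two properties and then the two halves of complementarity, namely mutual exclusivity and totality.

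Irreflexivity is immediate: by definition $p \slo q$ forces $p \incomp q$, and no element is incomparable with itself, so $p \slo p$ never holds. For complementarity, I would fix distinct $p, q \in L$ and apply Lemma~\ref{L:partition} at $p$. Since $\Left(p)$, $\Mid(p)$, $\Right(p)$ partition $L$, the element $q$ lies in exactly one of them; by the left-right descriptions this says that exactly one of $q \slo p$, $p \sim q$, $p \slo q$ holds. In other words, $p$ and $q$ are comparable under exactly one of $<$ (the $\Mid(p)$ case, $p \sim q$) and $\slo$ (the $\Left(p)$ or $\Right(p)$ case), which is precisely the complementarity requirement and simultaneously gives asymmetry of $\slo$, since the cases $q \in \Left(p)$ and $q \in \Right(p)$ are disjoint.

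The substantive step is transitivity: assuming $p \slo q$ and $q \slo r$, I would show $p \slo r$. Reading the two hypotheses through the path descriptions of Lemma~\ref{L:partition}, $p \slo q$ gives $p \ltx \LM(q)$, while $q \slo r$ gives $r \gtx \RM(q)$; since $\LM(q) \leq \RM(q)$ in $\MC L$, the latter yields $\LM(q) \ltx r$. Hence $p \leqx \LM(q) \leqx r$, so the maximal chain $C = \LM(q)$ separates $p$ and $r$, verifying condition (ii) in the definition of $p \slo r$.

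For condition (i), the required incomparability $p \incomp r$, I would invoke the final clause of Lemma~\ref{L:partition} at the intermediate element $q$: the hypothesis $p \slo q$ places $p \in \Left(q)$, and $q \slo r$ places $r \in \Right(q)$, and the lemma asserts that elements of $\Left(q)$ are incomparable with elements of $\Right(q)$. Thus $p \incomp r$, completing the verification that $p \slo r$. The main obstacle is exactly this incomparability: establishing it directly would require a fresh geometric argument, but routing it through the partition at $q$ reduces it to a citation of the lemma, which is why proving Lemma~\ref{L:partition} first is the efficient route.
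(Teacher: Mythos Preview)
Your proof is correct and follows essentially the same approach as the paper: both pivot at the middle element of the transitivity chain, invoke the incomparability clause of Lemma~\ref{L:partition} for condition~(i), and choose one of the extremal maximal chains through that middle element as the separating chain for condition~(ii). The only cosmetic difference is that the paper takes $C = \RM$ of the middle element while you take $C = \LM$, which is the symmetric and equally valid choice.
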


\begin{proof}
The relation $\slo$ is irreflexive by definition, since $p \incomp p$ is false.  
For transitivity, if $q \slo p \slo q'$, 
that is, if $q \in \Left(p)$ and $q' \in \Right(p)$, 
then (i) $q \incomp q'$ by the last clause of Lemma~\ref{L:partition} and (ii) $q \leqx C \leqx q'$ for
the choice $C = \RM(p)$, whence $q \slo q'$.

To verify that $\slo$ is a complement of $<$, we must prove that for $p \neq q$ in $L$, $q$ is comparable
to $p$ by exactly one of $<$ and $\slo$.  But this is the import of the assertion in Lemma~\ref{L:partition} that
the defined regions form a partition.
\end{proof}

The fact that $\slo$ is transitive has an interesting aspect:  If $a \incomp b$ and $b \incomp c$
and we are going to the right each time, then $a \incomp c$.


\section{Three guises of $\RR^2$} \label{S:R2}

In this paper we look at the real plane $\RR^2$ in three ways. The first way, as in Section~\ref{S:diagrams}, 
uses $\RR^2$ as a drawing board for diagrams. Both the set of points of a diagram and its order are our choice, subject only to mild restrictions (the noninterference and the $y$-isotone properties). 

The second guise is $\RR^2$ with the direct-product order. Again, we choose a finite set~$S$ of points of $\RR^2$, but this time they inherit the product order. The ordered set $S$ may lack the $y$-isotone property, but other properties are now automatic.  A~simple observation is the following.

\begin{lemma} \label{L:rectangle}
If $p \prec q$ in $P$, then 
the interval $[p, q]$ in $\RR^2$
\lp in the direct-product order\rp is a rectangle that includes the whole segment $\ol{pq}$. 
\end{lemma}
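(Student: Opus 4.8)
The plan is to prove this by direct computation using the definition of a diagram and the product order. The statement concerns a covering $p \prec q$ in the ordered set $P$ (here $P$ is a diagram sitting in $\RR^2$ with the direct-product order), and I must show that the interval $[p,q]$ taken in the product order is a genuine rectangle containing the segment $\ol{pq}$.

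First I would record what $p \prec q$ means in this second guise. In the direct-product order on $\RR^2$, a point $u = (u_x, u_y)$ satisfies $u \leq v$ iff $u_x \leq v_x$ and $u_y \leq v_y$. Since $p \prec q$ (so in particular $p < q$), we have $p_x \leq q_x$ and $p_y \leq q_y$. I would first argue that in fact both coordinate inequalities are strict, i.e.\ $p_x < q_x$ and $p_y < q_y$: if $p_x = q_x$, then $p$ and $q$ lie on a vertical segment, and the noninterference/diagram hypotheses force the points to be distinct with $p_y < q_y$, so that is fine for the $y$-coordinate; the substantive point is that $p \neq q$ combined with $p \leq q$ in the product order already gives at least one strict inequality, and I would check that a covering cannot collapse a coordinate in a way that breaks the rectangle being nondegenerate. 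In any case the interval $[p,q] = \setm{u \in \RR^2}{p \leq u \leq q}$ unwinds coordinatewise to $\setm{(u_x,u_y)}{p_x \leq u_x \leq q_x,\ p_y \leq u_y \leq q_y}$, which is exactly the axis-parallel rectangle with opposite corners $p$ and $q$.

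Second, I would verify the containment $\ol{pq} \subseteq [p,q]$. A point $u$ on the segment $\ol{pq}$ has the form $u = (1-t)p + t q$ for $t \in [0,1]$, so $u_x = (1-t)p_x + t q_x$ and $u_y = (1-t)p_y + t q_y$. Since convex combinations of two reals stay between them, we get $p_x \leq u_x \leq q_x$ and $p_y \leq u_y \leq q_y$, which says precisely $p \leq u \leq q$, i.e.\ $u \in [p,q]$. This is the entire content of ``includes the whole segment.'' Note that this step uses no lattice or planarity hypothesis beyond the coordinate inequalities $p_x \leq q_x$ and $p_y \leq q_y$, which hold because $p < q$ in the product order.

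There is no serious obstacle here; the lemma is essentially an observation, as the text itself signals (``A simple observation is the following''). The only point requiring a moment's care is the interplay between the two guises of $\RR^2$: one must be clear that the order on $P$ in this section is the \emph{product} order (not the diagram order of Section~\ref{S:diagrams}), so that $p < q$ really does mean both coordinates are $\leq$, making the rectangle description immediate. I would state this explicitly at the outset to avoid confusion, and the remainder is the two short coordinatewise verifications above.
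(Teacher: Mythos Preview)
Your proof is correct and is essentially the paper's own argument, spelled out in more detail: the paper's one-line proof says ``the points of the segment interpolate linearly in $x$ and $y$ between the coordinates of $p$ and $q$,'' which is precisely your convex-combination computation $u = (1-t)p + tq$. One small remark: your digression about whether both coordinate inequalities are strict is unnecessary (and not quite resolved in your write-up); the lemma does not require the rectangle to be nondegenerate, and neither the paper's proof nor the subsequent applications (Lemmas~\ref{L:R2-noninterference} and~\ref{L:R2-planarity}) need strictness, so you can simply drop that paragraph.
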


\begin{proof}
The points of the segment interpolate linearly in $x$ and $y$ between the coordinates of $p$ and $q$.
\end{proof}

\begin{lemma} \label{L:R2-noninterference} If $P$ is a finite ordered subset of $\RR^2$ in the direct-product order, then~$P$ has the \emph{noninterference property:} 
for $p \prec q$ in $P$, the line segment $\ol{pq}$ in $\RR^2$ intersects~$P$ only in $p$ and $q$.
\end{lemma}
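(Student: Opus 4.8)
The plan is to prove Lemma~\ref{L:R2-noninterference} by contradiction, exploiting the rectangle structure established in Lemma~\ref{L:rectangle} together with the fact that the product order on a finite set has no ``short-circuit'' points interpolating a covering. Suppose $p \prec q$ in $P$ and that some third point $r \in P$, distinct from $p$ and $q$, lies on the segment $\ol{pq}$. Since $r$ is on the segment, its coordinates interpolate linearly between those of $p$ and $q$: there is some $t$ with $0 < t < 1$ such that $r_x = (1-t)p_x + t\,q_x$ and $r_y = (1-t)p_y + t\,q_y$. I would first observe that because $p < q$ in the product order we have $p_x \leq q_x$ and $p_y \leq q_y$, so the interpolated coordinates satisfy $p_x \leq r_x \leq q_x$ and $p_y \leq r_y \leq q_y$; that is, $p \leq r \leq q$ in the product order.

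Next I would argue that $r$ is strictly between $p$ and $q$, i.e.\ $p \neq r \neq q$ forces $p < r < q$. The key point is that since $r \neq p$ and $r \neq q$, at least one coordinate strictly increases from $p$ to $r$ and at least one strictly increases from $r$ to $q$; combined with the weak inequalities in both coordinates this yields $p < r$ and $r < q$ in the product order. (One should check that $r$ cannot coincide with $p$ or $q$ in a way that collapses a coordinate while leaving it on the open segment, but the linear interpolation with $0 < t < 1$ rules this out as long as $p \neq q$, which holds since $p \prec q$.) Thus $p < r < q$ strictly, which contradicts the assumption that $p \prec q$ is a covering. This contradiction shows no such $r$ exists, so $\ol{pq}$ meets $P$ only in $p$ and $q$.

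The main obstacle, and the step deserving the most care, is the degenerate case where $p$ and $q$ agree in one coordinate, say $p_x = q_x$ (a vertical segment) or $p_y = q_y$ (a horizontal segment). In the product order a covering $p \prec q$ could in principle have equal $x$-coordinates, and then any interior point $r$ of the segment also shares that $x$-coordinate; one must confirm that the $y$-coordinate still strictly separates, so that $p < r < q$ persists. I expect the argument goes through uniformly because the covering relation $p \prec q$ already guarantees $p < q$ strictly, hence the two points differ in at least one coordinate, and linear interpolation with $0 < t < 1$ strictly moves that coordinate while respecting the (possibly trivial) inequality in the other. The remaining verifications are the routine linear-algebra checks that interior points of a segment interpolate strictly, which I would state but not belabor.
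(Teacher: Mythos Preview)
Your argument is correct and is essentially the paper's own proof, only more verbose. The paper dispatches the lemma in one line: by Lemma~\ref{L:rectangle}, any $r \in P \cap \ol{pq}$ satisfies $p \leq r \leq q$ in $P$, and then the covering $p \prec q$ forces $r = p$ or $r = q$. You re-derive the content of Lemma~\ref{L:rectangle} inline via the interpolation parameter $t$, cast the conclusion as a contradiction, and add a discussion of the degenerate (axis-parallel) case, but none of this changes the underlying idea; in particular, the degenerate-segment analysis is already subsumed by the simple observation that $p \leq r \leq q$ together with $p \prec q$ leaves only $r \in \{p,q\}$, with no need to establish strict inequalities separately.
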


\begin{proof}
By Lemma~\ref{L:rectangle}, if $r \in P \cap \ol{pq}$, then $p \leq r \leq q$ in $P$, and then $p \prec q$ implies that $r = p$ or $r = q$.
\end{proof}

\begin{lemma} \label{L:R2-planarity}
If $L$ is a finite ordered subset of $\RR^2$ in the direct-product order and~$L$ is a lattice in its own right \lp not necessarily a sublattice of $\RR^2$\rp, then $L$ has the \emph{planarity property:} two distinct covering line segments intersect only at endpoints, if at all.
\end{lemma}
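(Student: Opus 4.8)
The plan is to argue by contradiction. Suppose two distinct covering segments $\ol{pq}$ (with $p \prec q$) and $\ol{rs}$ (with $r \prec s$) share a point $t$; I will show that this already forces them to share an endpoint, and then rule out any second common point. The two ingredients are Lemma~\ref{L:rectangle}, which controls where an intersection point can sit in the product order, and the hypothesis that $L$ is a lattice under the restricted product order.

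First I would locate $t$. Since $t \in \ol{pq} \ci [p,q]$ and $t \in \ol{rs} \ci [r,s]$ by Lemma~\ref{L:rectangle}, we have $p \le t \le q$ and $r \le t \le s$ in $\RR^2$. Comparing coordinates gives $\max(p_x, r_x) \le t_x \le \min(q_x, s_x)$ and the analogous inequality for the $y$-coordinates, so in fact all four relations $p \le q$, $p \le s$, $r \le q$, $r \le s$ hold. Because the order of $L$ is the restricted product order, these are relations in $L$; in particular both $q$ and $s$ are upper bounds of $\set{p,r}$.

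Next I would invoke the lattice structure. The join $p \jj r$, taken in $L$ (which need not be the coordinatewise join, since $L$ is not assumed to be a sublattice of $\RR^2$), satisfies $p \le p \jj r \le q$, the upper bound coming from $q$; as $p \prec q$, we must have $p \jj r = p$ or $p \jj r = q$. If $p \jj r = p$, then $r \le p \le s$, and the cover $r \prec s$ forces $p = r$ or $p = s$. If $p \jj r = q$, then $q \le s$, because $s$ too is an upper bound of $\set{p,r}$, so $r \le q \le s$, and the cover $r \prec s$ forces $q = r$ or $q = s$. In every case the two segments share an endpoint, say $e$.

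It remains to show that the intersection cannot contain a second point besides $e$. This is the only subtle point, and the step I expect to be the main obstacle, although it is routine. A second common point would force $\ol{pq}$ and $\ol{rs}$ onto a single line, on which the restricted product order is a chain, since both segments rise to the right. On that line, $p \prec q$ and $r \prec s$ say that $\ol{pq} \cap L = \set{p,q}$ and $\ol{rs} \cap L = \set{r,s}$, as no element of $L$ can lie strictly between the endpoints of a cover; given that the two intervals already share $e$, they can then meet in more than the single point $e$ only if they coincide, i.e.\ $\set{p,q} = \set{r,s}$, contradicting distinctness. Hence the two segments are disjoint or meet precisely at the common endpoint $e$, which is the planarity property. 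The conceptually decisive step is the lattice computation that pins $p \jj r$ to an endpoint of $\ol{pq}$; everything else is bookkeeping.
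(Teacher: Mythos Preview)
Your proof is correct and shares the paper's key idea: from a common point of the two covering segments, Lemma~\ref{L:rectangle} yields the cross-comparabilities $p \le s$ and $r \le q$ in $L$, and then the join $p \jj r$ taken in $L$, being trapped in the covering interval $[p,q]$, must be an endpoint, which forces the two segments to share an endpoint. The only difference is in the finish. You dispose of a possible second intersection point by a separate collinearity argument; the paper instead also computes $q \mm s$ in $L$, observes that it too is a common endpoint of both segments, and argues that either $p \jj r \ne q \mm s$, in which case the segments share both endpoints and coincide, or $p \jj r = q \mm s$, in which case the intersection is that single point. The paper's route is a little more symmetric, but yours is equally valid and arguably more explicit about the collinear case.
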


\begin{proof}
 By Lemma~\ref{L:rectangle}, if $r \in \ol{pq} \cap \ol{p'q'} \ci \RR^2$, where $p \prec q$ and $p' \prec q'$ in $L$ are distinct coverings, then $p \leq r \leq q$ and $p' \leq r \leq q'$ in $\RR^2$, which entails $p \leq q'$ and $p' \leq q$ 
in~$\RR^2$ and hence in~$L$. Then \[p \jj p' \leq r \leq q \mm q'\] in $\RR^2$. Now $p \jj p'$ and $q \mm q'$ are elements of $L$ that lie on both original covering line segments and so are endpoints of those segments. If $p \jj p'$ and $q \mm q'$ are distinct, then together they must be the endpoints of each original line segment. Then the original line segments coincide, contrary to assumption. Otherwise, $p \jj p' = r = q \mm q'$, in which case the intersection point $r$ is in~$L$ and must be an endpoint, as claimed.
\end{proof}

The product order on $\RR^2$ might also be called the ``closed first-quadrant order'', because
$p < q$ if{f} $q - p$ is a point in the closed first quadrant of $\RR^2$ (other than the origin).
A companion complementary order on $\RR^2$ is the ``open fourth-quadrant order'',
similarly defined.

For the third guise, we again put an order on all of $\RR^2$,  this time by declaring that $p < q$ when $p \neq q$ and the vector $q - p$ makes an angle of at most $45^\circ$ (in absolute value) with the positive $y$-axis.  In other words, we have rotated the product order by $45^\circ$ counterclockwise in terms of which points are greater than the origin.  Let us call this guise of $\RR^2$ the \emph{slanted order}.  Observe that the slanted order has a~complementary order $\slo$, 
namely $p \slo q$ if $p \neq q$ and the vector $q - p$ makes an angle of less than $45^\circ$ with the positive $x$-axis.

While we could transform between the product order and the slanted order
by a simple $45^\circ$ rotation,
it is ultimately more helpful to use a rational map that is a scaled rotation.  Accordingly, to transform
from the slanted order to the product order we shall use
$\alpha: \RR^2 \rightarrow \RR^2$ given by
\[
   \ga \colon (x, y) \mapsto (x + y, y - x),
\]
which is a $45^\circ$ clockwise rotation scaled by $\sqrt 2$.  Its inverse $\gb = \ga^{-1}$ then takes
us from product order to slanted order and is given by
\[
   \gb \colon (x, y) \mapsto (x - y, x + y)/2.
\]

\begin{lemma} \label{L:R2-product-rot}
If $L$ is a finite ordered subset of~$\RR^2$ 
in direct-product order such that $L$ is a lattice in its own right, then
$\gb(L)$ is a planar lattice diagram isomorphic to $L$, with slanted order.
\end{lemma}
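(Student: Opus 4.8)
The plan is to exploit that $\gb$ is a \emph{linear} bijection of $\RR^2$ which, by its construction as a scaled rotation, is an order isomorphism from the product order to the slanted order. Since $\gb$ is linear, $\gb(q) - \gb(p) = \gb(q - p)$, so all comparisons are governed by where $\gb$ sends the ``up-cone'' of the product order. First I would check that $\gb$ carries the closed first quadrant (the directions $q - p$ with $p <_{\textup{prod}} q$) bijectively onto the slanted up-cone of directions within $45^\circ$ of the positive $y$-axis: it suffices to track the two bounding rays, $\gb(1,0) = (1/2, 1/2)$ and $\gb(0,1) = (-1/2, 1/2)$, which point in the $45^\circ$ and $135^\circ$ directions bounding exactly that cone. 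Hence $p <_{\textup{prod}} q$ iff $\gb(p) <_{\textup{slant}} \gb(q)$, so $\gb$ restricts to an order isomorphism of $L$ (product order) onto $\gb(L)$ (slanted order). In particular $\gb(L)$ is a lattice isomorphic to $L$, and---crucially---$\gb$ preserves the covering relation, so the covering pairs of $\gb(L)$ in the slanted order are precisely the $\gb$-images of the covering pairs of $L$.

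Next I would verify the two defining properties of a diagram for $\gb(L)$ under the slanted order. For $y$-isotonicity, note that every nonzero vector in the slanted up-cone has strictly positive $y$-coordinate (even the bounding directions $(t,t)$ and $(-t,t)$ with $t > 0$ do); thus whenever $\gb(p) <_{\textup{slant}} \gb(q)$, and a fortiori for each slanted-order covering, the $y$-coordinate strictly increases. For noninterference and planarity I would transfer the corresponding facts for $L$ in the product order, already supplied by Lemmas~\ref{L:R2-noninterference} and~\ref{L:R2-planarity}, across the bijection $\gb$. Being linear, $\gb$ sends the segment $\ol{pq}$ onto $\ol{\gb(p)\gb(q)}$, so $\Cov(\gb(L)) = \setm{\gb(s)}{s \in \Cov L}$; and being a bijection it commutes with intersection, $\gb(A) \cap \gb(B) = \gb(A \cap B)$. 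Therefore $\ol{\gb(p)\gb(q)} \cap \gb(L) = \gb(\ol{pq} \cap L) = \set{\gb(p), \gb(q)}$ by noninterference for $L$, while two distinct covering segments $\ol{\gb(p)\gb(q)}$ and $\ol{\gb(p')\gb(q')}$ meet only in shared endpoints because their $\gb$-preimages do, by planarity for $L$.

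The steps are largely bookkeeping, and the one point that needs care is the first one: pinning down that $\gb$ really is an order isomorphism between the two \emph{specified} orders, rather than merely a rotation of the plane. Everything else---the lattice structure, preservation of coverings, $y$-isotonicity, and the transfer of noninterference and planarity---follows formally once that isomorphism is in hand, using linearity and injectivity of $\gb$ together with the already-proved Lemmas~\ref{L:R2-noninterference} and~\ref{L:R2-planarity}. No genuinely hard estimate arises, so I expect the main risk to be notational: keeping straight which order (product versus slanted) each relation and each covering segment lives in.
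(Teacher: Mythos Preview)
Your proposal is correct and follows essentially the same route as the paper: use Lemmas~\ref{L:R2-noninterference} and~\ref{L:R2-planarity} for $L$ in the product order and transfer them to $\gb(L)$ via the linearity and bijectivity of $\gb$, then verify $y$-isotonicity separately. The only cosmetic difference is that you argue $y$-isotonicity geometrically from the slanted up-cone, whereas the paper computes it directly as $\gb(p)_y = \tfrac{1}{2}(p_x + p_y)$; and you are more explicit than the paper in spelling out why $\gb$ is an order isomorphism from product order to slanted order.
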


\begin{proof}
By Lemmas \ref{L:R2-noninterference} and \ref{L:R2-planarity}, $L$ has at least the properties of noninterference and planarity, properties that are preserved by $\gb$.
It remains to verify that~$\gb(L)$ has the $y$-isotone property.
Indeed, if $p' \prec q'$ and $p$ and $q$ are such that $p' = \gb(p)$ and $q' = \gb(q)$,
then $p \leq q$ in the product order, so $p_x \leq q_x$ and $p_y \leq q_y$
(with one of these strict); then $p'_y = \frac 12 (p_x + p_y) < \frac 12 (q_x + q_y) = q'_y$,
as required.
\end{proof}

\begin{lemma} \label{L:R2-slanted-planarity}
If $L$ is a finite ordered subset of $\RR^2$ in slanted order and $L$ is a lattice in its own right,
then $L$ is a planar lattice diagram.
\end{lemma}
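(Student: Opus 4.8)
The plan is to reduce the statement to Lemma~\ref{L:R2-product-rot} by transporting $L$ back to the product order by means of the map $\ga$. Since $L$ is given in slanted order, I would first form the image $\ga(L) \ci \RR^2$ and show that $\ga(L)$, with the product order inherited from $\RR^2$, meets the hypotheses of Lemma~\ref{L:R2-product-rot}; applying that lemma to $\ga(L)$ and then observing that $\gb(\ga(L)) = L$ will finish the proof.

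The one substantive point is that $\ga$ is an order isomorphism from the slanted order on $\RR^2$ onto the product order on $\RR^2$. As $\ga$ is linear, $\ga(q) - \ga(p) = \ga(q - p)$, so it suffices to check that $\ga$ carries the slanted-positive cone onto the product-positive cone. Writing $v = q - p = (v_x, v_y)$, the requirement that $v$ lie within $45^\circ$ of the positive $y$-axis is exactly $v_y \geq |v_x|$, that is, $v_y + v_x \geq 0$ and $v_y - v_x \geq 0$; but these are precisely the conditions that both coordinates of $\ga(v) = (v_x + v_y, v_y - v_x)$ be nonnegative, i.e., that $\ga(v)$ lie in the closed first quadrant. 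Since $\ga$ is invertible, $\ga(v) = 0$ only for $v = 0$, so indeed $p < q$ in slanted order if{}f $\ga(p) < \ga(q)$ in product order.

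Granting this, $\ga$ restricts to an order isomorphism of $L$ (slanted order) onto $\ga(L)$ (product order); in particular $\ga(L)$ is a finite subset of $\RR^2$ in product order that is a lattice in its own right, its meets and joins corresponding to those of $L$ under $\ga$. Lemma~\ref{L:R2-product-rot} then gives that $\gb(\ga(L))$ is a planar lattice diagram in slanted order, and since $\gb = \ga^{-1}$ this set is $L$ itself. I expect no real obstacle here: the whole argument is a linear change of coordinates, and the only computation is the one-line cone correspondence above. The only care needed is to note that being ``a lattice in its own right'' is an order property, hence preserved by the order isomorphism $\ga$ in both directions.
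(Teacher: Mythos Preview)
Your proof is correct and follows essentially the same route as the paper: set $L' = \ga(L)$, apply Lemma~\ref{L:R2-product-rot} to $L'$, and conclude that $\gb(\ga(L)) = L$ is a planar lattice diagram. The paper's version is terser, leaving implicit the verification that $\ga$ carries the slanted cone to the product cone; your explicit cone computation fills in that detail but does not change the strategy.
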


\begin{proof}
Let $L' = \ga(L)$.  Then
Lemma~\ref{L:R2-product-rot} applies to $L'$ (in place of $L$) to show that
$L = \gb(\ga(L))$ is a planar lattice diagram in slanted order.
\end{proof}


\section{Proof of Zilber's Theorem Plus}\label{S:Zilber-proof}

In this section we prove Zilber's Theorem Plus, and hence Zilber's Theorem itself. We start with additional lemmas.
The first lemma is due to B. Dushnik and E.\,W. Miller \cite[Lemma 3.51]{DM41}.

\begin{lemma} 
\label{L:ii-to-iii} 
If $\gr$ and $\gl$ are complementary \lp strict\rp order relations on a set $P$, then $\gr \uu \gl$ and 
$\gr \uu \tilde{\gl}$ are both total \lp strict\rp order relations on $P$, 
where $\tilde{\gl}$ is the dual of~$\gl$, and their intersection is $\gr$.
\end{lemma}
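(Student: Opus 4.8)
The plan is to verify the order axioms for $\gr \uu \gl$, then to observe that $\gr$ and $\tilde{\gl}$ form another complementary pair so that the identical argument disposes of $\gr \uu \tilde{\gl}$, and finally to compute the intersection by a short inclusion argument. Throughout, the decisive hypothesis is that each pair of distinct elements is comparable by \emph{exactly} one of $\gr$ and $\gl$.

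First I would handle the routine axioms for $\gr \uu \gl$. Irreflexivity is immediate, since $\gr$ and $\gl$ are each irreflexive. Totality is precisely complementarity: any two distinct $a$ and $b$ are $\gr$- or $\gl$-comparable, so one of the ordered pairs $(a,b)$, $(b,a)$ lies in $\gr \uu \gl$. For antisymmetry, suppose $a \mathrel{(\gr \uu \gl)} b$ and $b \mathrel{(\gr \uu \gl)} a$ with $a \neq b$. If both relationships are carried by $\gr$, or both by $\gl$, we contradict antisymmetry of that one relation; if one is carried by $\gr$ and the other by $\gl$, then $a$ and $b$ are comparable by both relations, contradicting complementarity.

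The substantive step, and the one I expect to be the main obstacle, is transitivity. Given $a \mathrel{(\gr \uu \gl)} b$ and $b \mathrel{(\gr \uu \gl)} c$, irreflexivity and the antisymmetry just proved force $a$, $b$, $c$ to be pairwise distinct, so by totality either $a \mathrel{(\gr \uu \gl)} c$, which is what we want, or else $c \mathrel{(\gr \uu \gl)} a$, a possibility I would rule out. The latter produces a directed triangle on $\{a, b, c\}$, each of whose three edges carries a label $\gr$ or $\gl$ according to the unique relation comparing its endpoints. If all three labels agree, transitivity of that relation yields $a \mathrel{\gr} a$ or $a \mathrel{\gl} a$, which is impossible. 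Otherwise two of the three edges share a label; since any two edges of a triangle meet at a vertex, they compose, and transitivity of their common relation makes the endpoints of the remaining edge comparable by that relation --- whereas the remaining edge already compares them by the other relation, contradicting complementarity. Hence no such triangle exists, and transitivity holds.

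For the second total order I would avoid repeating the work: two elements are $\tilde{\gl}$-comparable exactly when they are $\gl$-comparable, so $\gr$ and $\tilde{\gl}$ are again complementary (and $\tilde{\gl}$, being the dual of a strict order, is itself a strict order); the argument above then applies verbatim to $\gr \uu \tilde{\gl}$. Finally, for the intersection, the inclusion $\gr \ci (\gr \uu \gl) \cap (\gr \uu \tilde{\gl})$ is clear. Conversely, if a pair $(a,b)$ lies in both unions but not in $\gr$, then membership in the first union gives $a \mathrel{\gl} b$ while membership in the second gives $a \mathrel{\tilde{\gl}} b$, that is, $b \mathrel{\gl} a$; these contradict antisymmetry of $\gl$. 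Thus the intersection is exactly $\gr$, as claimed.
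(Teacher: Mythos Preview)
Your proof is correct and follows essentially the same route as the paper's: irreflexivity and totality are immediate, antisymmetry is handled by the same two-case analysis, and transitivity is proved by forming the directed triangle and applying pigeonhole to the edge labels. The only cosmetic differences are that the paper collapses your two transitivity subcases into one (it composes two like-labeled consecutive edges and invokes the already-proved antisymmetry of $\gr \uu \gl$ rather than complementarity), and it computes the intersection via the set-theoretic distributive law $(\gr \uu \gl) \cap (\gr \uu \tilde{\gl}) = \gr \uu (\gl \cap \tilde{\gl}) = \gr$ instead of your element-chase; neither difference is substantive.
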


\begin{proof}
Write $\theta = \gr \uu \gl$. 
Since $\gr$ and $\gl$ are complementary, 
$\theta$ is total and irreflexive.  
It will be useful to verify that $\theta$ is antisymmetric, which we must do directly, since transitivity is not yet available.  
Indeed, if $p \rtheta q \rtheta p$ and if both 
instances of $\theta$ involve the same one of $\gr$ and $\gl$, 
then antisymmetry of an order relation applies, 
while if the two instances involve both $\gr$ and $\gl$, 
then complementarity applies, a contradiction either way.  
Next we verify that $\theta$ is transitive, 
by using antisymmetry:
If $p \rtheta q \rtheta r$ but not $p \rtheta r$, 
then $r \rtheta p$, making a ``triangle'' 
\[
   p \rtheta q \rtheta r \rtheta p.
\]
But then two consecutive sides of the triangle 
must both involve $\gr$ or both involve $\gl$, 
say $q \rgl r \rgl p$, 
yielding $q \rgl p$ and thence $q \rtheta p$, 
which by antisymmetry of $\theta$ contradicts 
that $p \rtheta q$; 
therefore, $p \rtheta r$ as desired.
Finally, since $\gr$ and $\gl$ are complementary,
\[(\gr \uu \gl) \cap (\gr \uu \tilde{\gl}) = \gr \uu (\gl \cap \tilde{\gl}) = \gr \uu \emptyset = \gr.\qedhere\]
\end{proof}

The next lemma is due to T. Hiraguti \cite[Theorem 9.3]{tH55}; we state it in a special case only.

\begin{lemma}\label{L:iii-to-iv}
Let $\gr$ be the order relation of an ordered set $P$  and let $\tau_1, \tau_2$ be total orders on~$L$ with $\gr = \tau_1 \cap \tau_2$. Then the diagonal map 
\[
   \gd \colon (L, \gr) \mapsto (L, \gt_1) \times (L, \tau_2)
\]
defined by $\gd(p) = (p, p)$ is an order embedding with respect to the direct-product order.
\end{lemma}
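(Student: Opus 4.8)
The plan is to verify the two defining properties of an order embedding for the diagonal map $\gd$: first that it is order-preserving, and second that it reflects order (equivalently, that incomparabilities are preserved in the appropriate sense). Concretely, I must show that for all $p, q \in L$, we have $p \mathrel{\gr} q$ if and only if $\gd(p) < \gd(q)$ in the direct-product order on $(L, \tau_1) \times (L, \tau_2)$.

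First I would unwind the direct-product order. By definition, $\gd(p) = (p,p) < (q,q) = \gd(q)$ holds exactly when $p \mathrel{\tau_1} q$ and $p \mathrel{\tau_2} q$ (with at least one strict, but since each $\tau_i$ is a strict total order, $p \mathrel{\tau_i} q$ already means $p \neq q$ and the pair is strictly below). Thus the condition $\gd(p) < \gd(q)$ is precisely $p \mathrel{\tau_1} q$ \emph{and} $p \mathrel{\tau_2} q$, which is by definition $p \mathrel{(\tau_1 \cap \tau_2)} q$. Since we are given $\gr = \tau_1 \cap \tau_2$, this is exactly $p \mathrel{\gr} q$. So the biconditional falls out immediately from the hypothesis.

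The only remaining point is to confirm that $\gd$ is injective, so that it is genuinely an embedding of ordered sets rather than merely an order-preserving-and-reflecting map on a multiset; but this is automatic, since $\gd(p) = \gd(q)$ forces $p = q$ by reading off either coordinate. I would state this in one line.

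Honestly, I do not expect any real obstacle here: the lemma is essentially a restatement of the hypothesis $\gr = \tau_1 \cap \tau_2$ through the lens of the product order, once one observes that for a strict total order $\tau_i$ the relation $p \mathrel{\tau_i} q$ coincides with the strict coordinate comparison. The one place to be slightly careful is the bookkeeping of strictness in the product order: I should note explicitly that because each $\tau_i$ is total and strict, the condition $p \mathrel{\tau_1} q$ and $p \mathrel{\tau_2} q$ is equivalent to $\gd(p) \neq \gd(q)$ together with coordinatewise $\leq$, so that no degenerate case (equal coordinates on one side) slips through. With that remark in place the proof is a direct unwinding of definitions.
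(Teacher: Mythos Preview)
Your proposal is correct and follows essentially the same argument as the paper: both simply unwind $\gd(p) < \gd(q)$ coordinatewise to $p \mathrel{\tau_1} q$ and $p \mathrel{\tau_2} q$, identify this with $p \mathrel{(\tau_1 \cap \tau_2)} q$, and invoke the hypothesis $\gr = \tau_1 \cap \tau_2$. Your additional remarks on injectivity and strictness bookkeeping are fine but not needed; the paper dispatches the lemma in a single chain of if-and-only-if's.
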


\begin{proof}
$\gd(p) < \gd(q)$ if{}f $(p, p) < (q, q)$ coordinatewise in the product
iff $p \rgt_1 q$ and $p \rgt_2 q$ iff $p \mathbin{(\rgt_1 \cap \rgt_2)} q$ iff  
$p \rgr q$ in $L$.
\end{proof}

\begin{proof} [Proof of Zilber's Theorem Plus] \hfill

(i) implies (ii).  The hypothesis is that $L$ \emph{has} a planar lattice diagram.
Without loss of generality,
we may assume that $L$ \emph{is} a planar lattice diagram.  Then Theorem~\ref{T:complementary}
applies to show the existence of a complementary order.

(ii) implies (iii). By Lemma~\ref{L:ii-to-iii}.

(iii) implies (iv). By Lemma~\ref{L:iii-to-iv}.

(iv) implies (i). Given an order embedding of $L$ 
into the direct product of two chains, 
further embed the chains into $\RR$ 
in any order-preserving way, 
in which case~$L$ is embedded in $\RR^2$ 
with direct-product order, 
and then apply the transformation 
$\gb$ of Lemma~\ref{L:R2-product-rot}. 
By the same lemma, 
the resulting transformed image $\bar L$ of $L$ 
is a planar lattice diagram isomorphic to $L$.
\end{proof}


\section{Planar canonical form}\label{S:canonical}

The converse implication in Zilber's Theorem 
states that a finite lattice with a complementary order relation is planar. Although this fact has been proved by the combined implications (ii) to (iii) to (iv) to (i) in the proof of Theorem~\ref{T:Zilber-plus}, there is a more direct description in terms of a ``canonical'' planar lattice diagram, as follows.

We need some more notation. For any finite ordered set $(P, <)$ and $p \in P$, let us write 
\begin{align*}
\Lo(p, <) &= \setm{q \in P}{q < p},\\
\Hi(p, <) &= \setm{q \in P}{p < q}. 
\end{align*}

We define the \emph{imbalance} of $<$ at $p$ by
\[
   \imbal(p, <) = |\Lo(p, <)| - |\Hi(p, <)|.
\]
\noindent
Thus elements low with respect to $<$ may have negative imbalance, while higher elements have positive imbalance. For example, if $P$ is a planar lattice diagram $L$, 
then we obtain 
\[
   \imbal(p, <) = |\Under(p)| - |\Over(p)|,
\] 
where 
\begin{align*}
   \Under(p) &= \setm{q}{q < p};\\
    \Over(p) &= \setm{q}{q > p}
\end{align*}
For the complementary relation $\slr$, we have 
\[
   \imbal(p, \slr) = |\Left(p)| - |\Right(p)|.
\]

We shall show that any planar lattice diagram, or more generally any finite lattice with specified complementary order relation, is isomorphic to a \emph{canonical} planar lattice diagram in which \emph{positions of elements} are determined by \emph{counts of relationships}, as follows.

We say that a planar lattice diagram $M$ 
is in \emph{planar canonical form}, 
if $M$ has the slanted order on $\RR^2$ 
and for each $p \in M$, we have
\[
   p = \lp \imbal(p, \slo), \imbal(p, <) \rp.
\]
For a planar lattice $L$ with specified complementary relation $\gl$, we say that the \emph{canonical map} $\Canon\colon L \rightarrow \RR^2$ in slanted order is defined by
\[
	\Canon(p) = \lp \imbal(p, \gl), \imbal(p, <) \rp.
\]
Thus in the terminology of oriented planar lattices (Section~\ref{SS:Terminology}), the oriented planar lattice diagram $(M, <, \slo)$ in slanted order is in planar canonical
form iff $\Canon$ is the identity map on $M$.  We may write $\lp \Canon(L), <, \slo \rp$ for
$\Canon(L)$ as an oriented planar lattice.

\begin{theorem} \label{T:canonical}
Let $L$ be a finite lattice with complementary order relation~$\gl$ and let $\Canon$ be the associated canonical map of $L$ into $\RR^2$.  Let $\bar L = \Canon(L)$ with slanted order.
Then
\begin{enumerate}
\item[\tup{(1)}] $\bar L$ is a planar lattice diagram in planar canonical form.

\item[\tup{(2)}] $\Canon$ is an isomorphism of  $\lp L, <, \gl \rp$ with $\lp \bar L, <, \slo \rp$ as oriented planar lattices.
\end{enumerate}
\end{theorem}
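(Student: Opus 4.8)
The plan is to conjugate the canonical map by the scaled rotation $\ga$ of Section~\ref{S:R2}, turning the problem into a statement about the product order, and to recognize the two coordinates that appear as the imbalances of the two Dushnik--Miller total orders attached to $(L,<,\gl)$. Concretely, set $\theta_1 = {<}\cup\gl$ and $\theta_2 = {<}\cup\tilde{\gl}$, which by Lemma~\ref{L:ii-to-iii} are total orders with ${<} = \theta_1\cap\theta_2$. Writing $\phi = \ga\circ\Canon$, a direct computation using $\ga\colon(x,y)\mapsto(x+y,y-x)$ gives
\[
  \phi(p) = \bigl(\imbal(p,<)+\imbal(p,\gl),\ \imbal(p,<)-\imbal(p,\gl)\bigr) = \bigl(\imbal(p,\theta_1),\,\imbal(p,\theta_2)\bigr),
\]
the last equality holding because $\Lo(p,\theta_1)$ is the disjoint union of $\Under(p)$ and $\Left(p)$ and $\Hi(p,\theta_1)=\Over(p)\cup\Right(p)$, with the roles of $\Left$ and $\Right$ interchanged for $\theta_2$.

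First I would establish part (1) apart from the canonical-form identity. For a total order $\tau$ on the $n$-element set $L$, the rank $|\Lo(p,\tau)|$ runs bijectively over $\{0,\dots,n-1\}$, so $\imbal(p,\tau)=2|\Lo(p,\tau)|-(n-1)$ is a strictly $\tau$-increasing injection of $L$ into $\RR$. Applying this to $\theta_1$ and $\theta_2$ shows that $\phi$ is injective and that $\phi(p)<\phi(q)$ in the product order iff $p\,\theta_1\,q$ and $p\,\theta_2\,q$, i.e.\ iff $p<q$. Thus $\phi$ is an order embedding of $(L,<)$ into $\RR^2$ with product order, and $\Canon=\gb\circ\phi$ is an order embedding into slanted order whose image $\bar L$ is a lattice in its own right isomorphic to $(L,<)$. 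Lemma~\ref{L:R2-slanted-planarity} then makes $\bar L$ a planar lattice diagram and $\Canon$ a lattice isomorphism $L\to\bar L$. Next I would transfer the complementary relation: the open-fourth-quadrant order is the complement of the product order, and $\ga$ carries the slanted order and its complement to the product order and its open-fourth-quadrant complement, so it suffices to check that $\phi(p)$ lies open-fourth-quadrant below $\phi(q)$ exactly when $p\gl q$. By the monotonicity above this reads $p\,\theta_1\,q$ and $q\,\theta_2\,p$, that is, ($p<q$ or $p\gl q$) and ($q<p$ or $p\gl q$); since $p<q$ and $q<p$ cannot both hold, this is equivalent to $p\gl q$. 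Hence $\Canon$ sends $\gl$ to the geometric slanted complement on $\bar L$.

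The step I expect to be the main obstacle is identifying this geometric slanted complement on $\bar L$ with the \emph{left-right} order $\slo$ of the diagram $\bar L$ (the relation named in the theorem and produced by Theorem~\ref{T:complementary}). Both are complementary to $<$, so by trichotomy it is enough to prove the single inclusion that $p\slo q$ forces $q-p$ into the geometric right cone; with incomparability this amounts to showing $p_x<q_x$. The key geometric input is that in slanted order every covering segment rises in $y$ at least as fast as it moves in $x$, so each path function $f_C$ is $1$-Lipschitz in $y$. From $p\slo q$ we have a maximal chain $C$ with $p_x\le f_C(p_y)$ and $f_C(q_y)\le q_x$, whence
\[
  q_x-p_x \ge f_C(q_y)-f_C(p_y) \ge -\,|p_y-q_y|,
\]
and since $p\incomp q$ gives $|q_x-p_x|>|q_y-p_y|$, the possibility $q_x\le p_x$ is excluded, so $q_x>p_x$. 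This simultaneously identifies the two complements and shows that $\bar L$ is well drawn.

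With both $<$ and $\gl$ shown to transfer, $\Canon$ is an isomorphism of the oriented lattices $(L,<,\gl)$ and $(\bar L,<,\slo)$, which is part (2). The canonical-form identity in part (1) is then immediate: an oriented isomorphism preserves $\Under$, $\Over$, $\Left$, and $\Right$, hence all imbalances, so for $p=\Canon(q)\in\bar L$ we get $\bigl(\imbal(p,\slo),\imbal(p,<)\bigr)=\bigl(\imbal(q,\gl),\imbal(q,<)\bigr)=\Canon(q)=p$; that is, $\Canon$ is the identity on $\bar L$, so $\bar L$ is in planar canonical form.
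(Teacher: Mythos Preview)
Your argument is correct and follows the same route as the paper: conjugate $\Canon$ by the scaled rotation $\ga$, recognize the resulting product-order coordinates as $\imbal(p,\theta_1)$ and $\imbal(p,\theta_2)$ for the two Dushnik--Miller total orders, and read off order- and complement-preservation from there; the canonical-form identity then follows because an oriented isomorphism preserves all the relevant counts.

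Where you go beyond the paper is in your Lipschitz paragraph. The paper uses the symbol $\slo$ both for the path-function left-right order of Section~\ref{S:complementary} and for the geometric complement of the slanted order introduced in Section~\ref{S:R2}, and its proof of (2) only literally shows that $\gl$ is carried to the latter (``the complementary order $\slo$ on $\RR^2$ in slanted order''). Your observation that covering segments in slanted order satisfy $|q_x-p_x|\le q_y-p_y$, so that every path function $f_C$ is $1$-Lipschitz in $y$, cleanly forces the two meanings of $\slo$ to coincide on $\bar L$. This fills a point the paper leaves implicit and, as you note, simultaneously shows $\bar L$ is well drawn, so Lemma~\ref{L:canon-well-drawn} drops out as a byproduct rather than needing a separate argument.
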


\begin{proof}
We claim that $\Canon$ is achievable as a composition of lattice isomorphisms
involved in the steps of the proof of Theorem~\ref{T:Zilber-plus}. The claim is justified as follows.
For clarity, let us write $\gr$ for $<$.  
Lemma~\ref{L:ii-to-iii} shows that $\gr \cup \gl$ and $\gr \cup \tilde{\gl}$ are total orders on~$L$, making $L$ a chain in two ways.   
Then Lemma~\ref{L:iii-to-iv} order-embeds $L$ in the direct product of these two chains by $\gd \colon p \mapsto (p, p)$.
 
Next, as in the proof of ``(iv) implies (i)'', 
we have to to choose embeddings of the two chains into $\RR$; let us use $\imbal$ as the map in each case.  
For the first chain, we have 
\[
   \imbal(p, \gr \cup \gl) = \imbal(p, \gr) + \imbal(p, \gl),
\]
since the relations $\gr$ and $\gl$ are disjoint.  
Similarly, taking into account the dualization of $\gl$, 
we get 
\[
   \imbal(p, \gr \cup \tilde{\gl}) = \imbal(p, \gr) - \imbal(p, \gl).
\]
In other words,
if we write $a = \imbal(p, \gr)$ 
and $b = \imbal(p, \gl)$, so far we have a map of~$L$
into $\RR^2$ in product order given 
by $p \mapsto \lp a + b, a - b \rp$, which we may
recognize as $\ga(b, a)$, where $\ga$ is the scaled rotation defined
in Section~\ref{S:R2}.  Now as a final
step we can transform to slanted order by composing with $\gb = \ga^{-1}$ to obtain
$(b, a)$, which is $\Canon(p)$.
This completes the proof of the claim and also shows
that $\bar L$ is indeed a planar lattice diagram for $L$, as asserted in (1).

Now let us examine how the complementary order $\gl$ has been treated under this
composition.  Because of the dualization in the second total order, $\gl$ is first mapped
via $\gd$ and the imbalance embeddings to
the open-fourth-quadrant order of Section~\ref{S:R2}, which is then scaled and
rotated by $\gb$ to the complementary order $\slo$ on $\RR^2$ in slanted order, at each
step restricted to the appropriate image of $L$.  This completes the proof of (2).

Finally, to complete the proof of (1), we must show that $\bar L$ has planar canonical
form.  
But from (2) we know that the counts defining the relevant imbalances are preserved by $\Canon$.
Therefore $\bar L$
has the same imbalances as $L$ and hence $\Canon$ is the identity map on $\bar L$ as required.
\end{proof}

\begin{corollary} \label{C:unique}
Each isomorphism class of oriented planar lattices contains a unique member in planar canonical
form.
\end{corollary}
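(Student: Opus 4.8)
The plan is to split the statement into existence and uniqueness. Existence is immediate from Theorem~\ref{T:canonical}: for any oriented planar lattice $(L, <, \gl)$, parts (1) and (2) of that theorem exhibit $(\bar L, <, \slo)$ in planar canonical form together with the isomorphism $\Canon$, so every isomorphism class contains at least one canonical member.

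For uniqueness I would take two members $(M, <, \slo)$ and $(M', <, \slo)$ of a single isomorphism class, both in planar canonical form, and fix an isomorphism $\varphi\colon M \to M'$ of oriented planar lattices. The heart of the argument is an invariance observation: by definition $\varphi$ preserves $<$ and $\slo$ in both directions, so it carries each of $\Under(p)$, $\Over(p)$, $\Left(p)$, $\Right(p)$ bijectively onto its counterpart at $\varphi(p)$. Hence $\imbal(p, <) = \imbal(\varphi(p), <)$ and $\imbal(p, \slo) = \imbal(\varphi(p), \slo)$ for every $p \in M$.

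Now I would invoke the defining property of canonical form, namely $p = (\imbal(p, \slo), \imbal(p, <))$ and likewise $\varphi(p) = (\imbal(\varphi(p), \slo), \imbal(\varphi(p), <))$. Combining these two identities with the invariance just established yields $\varphi(p) = p$ as points of $\RR^2$, for all $p \in M$. Thus $M$ and $M'$ have the same underlying point set; since a canonical diagram carries the ambient slanted order of $\RR^2$ together with its complement $\slo$, equality of the point sets forces the two oriented structures to coincide outright, so $(M, <, \slo) = (M', <, \slo)$.

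The step deserving the most attention --- conceptually simple but the crux of the matter --- is the invariance of the imbalance counts, and in particular the fact that it requires an isomorphism of \emph{oriented} planar lattices rather than a mere lattice isomorphism. A plain lattice isomorphism would preserve $\imbal(p, <)$ but could interchange left and right across incomparable pairs, corrupting $\imbal(p, \slo)$; it is exactly the preservation of $\gl$ (equivalently $\slo$) built into the oriented notion that makes the two imbalances simultaneously invariant and drives the whole uniqueness conclusion.
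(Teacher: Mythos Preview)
Your proof is correct and follows essentially the same approach as the paper: existence via Theorem~\ref{T:canonical}, and uniqueness via the observation that an isomorphism of oriented planar lattices preserves the imbalance counts, forcing any two canonical representatives to coincide as point sets in $\RR^2$. Your version simply spells out in detail what the paper states in a single sentence, and your closing remark on why the \emph{oriented} hypothesis is essential is a welcome clarification.
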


\begin{proof}
Each isomorphism class contains at least one member in planar canonical form, since if
$\lp L, <, \gl \rp$ is one member of the class then $\lp \Canon(L), <, \slo \rp$ is in planar
canonical form, by Theorem~\ref{T:canonical}.
Since an isomorphism of oriented planar lattices preserves the counts
needed to compute the relevant imbalances, each isomorphism class has at most one member in
planar canonical form.
\end{proof}

The uniqueness in Corollary~\ref{C:unique} justifies the appellation ``canonical''.

Now let us focus on the application where $L$ is a planar lattice diagram and $\gl$ is $\slo$.
Here applying
$\Canon$ provides  an automated way of ``straightening out'' the diagram~$L$.
For example, the planar lattice diagram $L_A$ of Figure~\ref{F:LA} and its isomorph~$L_B$ of Figure~\ref{F:LB} share the canonical form $L_C$ shown in Figure~\ref{F:LC}.  Consider
the element $t \in L_A$, which has left-right imbalance $2 - 3 = -1$ 
(since $p,u \slo t \slo r, s, w$) and down-up imbalance $2 - 2 = 0$, giving the point $(-1,0)$. This calculation explains why the element labeled $t$ in Figure~\ref{F:LC} is to the left of center (even though Figure~\ref{F:LB} has $t$ right of center);  there are more elements to the right of $t$ than to the left.  Observe also that the lattice elements 0 and 1 are necessarily on the $y$-axis in any
example.

\begin{figure}[htb]
\centerline{\scalebox{1}{\includegraphics{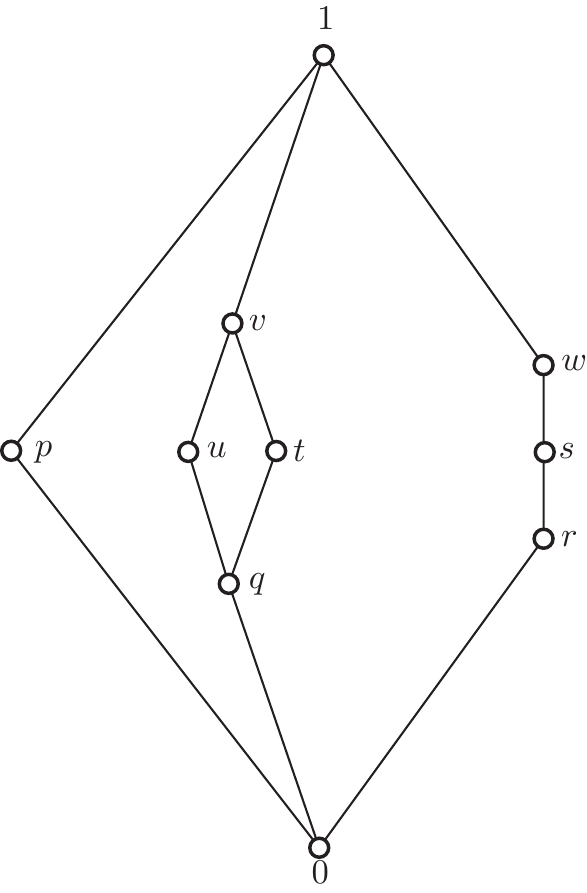}}}
\caption{The planar lattice diagram $L_C = \Canon(L_A) = \Canon(L_B)$}
\label{F:LC}
\end{figure}

We state two more properties of the planar canonical form.
\begin{lemma} \label{L:canon-sub-in-place}
A planar lattice diagram $L$ in canonical form has the  property \tup{(Sub)}.
\end{lemma}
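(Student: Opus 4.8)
The plan is to reduce property (Sub) directly to Lemma~\ref{L:R2-slanted-planarity}, exploiting the fact that the canonical form realizes $L$ as a set of points of $\RR^2$ carrying the slanted order. The essential observation is that a sublattice $S$ of $L$, although it need not be closed under the ambient lattice operations of $\RR^2$, is still a lattice \emph{in its own right} sitting inside $\RR^2$ with the slanted order — and that is exactly the hypothesis under which Lemma~\ref{L:R2-slanted-planarity} produces a planar lattice diagram. So almost all the geometric work has already been done in Section~\ref{S:R2}.

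First I would record that, since $L$ is in planar canonical form, $L$ is a finite set of points of $\RR^2$ whose lattice order $<$ coincides with the slanted order inherited from $\RR^2$. Next I would take an arbitrary sublattice $S$ of $L$ and note two things: $S$ is a lattice, being a sublattice of $L$; and the order of $S$ is the restriction of $<$, hence the restriction of the slanted order of $\RR^2$. Thus $S$ is a finite ordered subset of $\RR^2$ in slanted order that is a lattice in its own right. Applying Lemma~\ref{L:R2-slanted-planarity} to $S$ then yields that $S$ is itself a planar lattice diagram. It remains to match this with the wording of (Sub): the covering segments of the diagram furnished by Lemma~\ref{L:R2-slanted-planarity} are precisely the segments $\ol{pq}$ for the covering pairs $p \prec q$ computed \emph{within} $S$, which is exactly the ``reconnection of coverings in $S$'' of (Sub), and the noninterference and planarity conclusions of that lemma refer to $S$ alone.

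I expect the only genuinely delicate point to be conceptual rather than computational: one must notice that a covering $p \prec q$ in $S$ may have elements of $L$ lying strictly between $p$ and $q$, so the reconnection really does replace the segments of $\Cov L$ by new, possibly longer ones, and it is \emph{these} segments whose noninterference and mutual non-overlap must be checked. The force of the argument is that Lemma~\ref{L:R2-slanted-planarity} already accommodates this, because its hypothesis asks only that the point set be a lattice in its own right and its conclusions are phrased relative to that point set; no appeal to $S$ being a sublattice of $\RR^2$ in the product order is needed. Consequently there is no residual calculation to perform here — the content lies entirely in recognizing that canonical form places every sublattice in slanted order, so that the $\RR^2$ lemmas apply verbatim.
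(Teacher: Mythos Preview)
Your argument is correct and matches the paper's proof exactly: the paper simply says to apply Lemma~\ref{L:R2-slanted-planarity} to sublattices, noting that a lattice in planar canonical form has slanted order by definition. Your more detailed unpacking of why this works (in particular, that $S$ need only be a lattice in its own right, not a sublattice of $\RR^2$) is accurate and faithfully expands the paper's one-line justification.
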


\begin{proof}
Apply Lemma~\ref{L:R2-slanted-planarity} to sublattices, noting that a lattice in planar canonical form has slanted order by definition.
\end{proof}

\begin{lemma} \label{L:canon-well-drawn}
A planar lattice diagram in canonical form is well drawn.
\end{lemma}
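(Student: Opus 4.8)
The plan is to unwind both definitions and reduce the statement to a short counting argument. By definition, a planar lattice diagram is \emph{well drawn} precisely when $p \slo q$ forces $p_x < q_x$, and in planar canonical form the first coordinate of a point is exactly its left-right imbalance, $p_x = \imbal(p, \slo) = |\Left(p)| - |\Right(p)|$ (by the definition of canonical form together with the identity $\imbal(p, \slo) = |\Left(p)| - |\Right(p)|$ recorded above). So the entire lemma reduces to establishing the implication
\[
   p \slo q \ \Longrightarrow\ |\Left(p)| - |\Right(p)| < |\Left(q)| - |\Right(q)|.
\]

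The key step is to compare the left and right regions of $p$ and $q$ using the fact, established in Theorem~\ref{T:complementary}, that $\slo$ is transitive. Assuming $p \slo q$, I would first show the inclusion $\Left(p) \cup \set p \ci \Left(q)$: any $x$ with $x \slo p$ satisfies $x \slo q$ by transitivity, so $x \in \Left(q)$, and $p \slo q$ itself places $p$ into $\Left(q)$. Since $\slo$ is irreflexive, $p \notin \Left(p)$, so the inclusion is strict and $|\Left(p)| \le |\Left(q)| - 1$. A symmetric argument gives $\Right(q) \cup \set q \ci \Right(p)$, and hence $|\Right(q)| \le |\Right(p)| - 1$.

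Subtracting these two cardinality bounds yields
\[
   |\Left(p)| - |\Right(p)| \le \bigl(|\Left(q)| - 1\bigr) - \bigl(|\Right(q)| + 1\bigr) = |\Left(q)| - |\Right(q)| - 2,
\]
which is strictly less than $|\Left(q)| - |\Right(q)|$. This establishes the displayed implication, hence $p_x < q_x$, and so the lemma.

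There is essentially no deep obstacle here once transitivity of $\slo$ is in hand; the one point requiring care is the bookkeeping that makes each inclusion \emph{strict}, namely that $p$ is a genuinely new element of $\Left(q)$ and $q$ a genuinely new element of $\Right(p)$, which is exactly what irreflexivity of $\slo$ guarantees. It is worth remarking that the argument in fact produces the stronger gap $p_x \le q_x - 2$, although only $p_x < q_x$ is needed for well-drawnness.
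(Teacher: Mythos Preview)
Your proof is correct and takes essentially the same approach as the paper: both arguments identify $p_x$ with $\imbal(p,\slo)$ and then use transitivity of $\slo$ to conclude that the imbalance strictly increases from $p$ to $q$. The paper compresses the counting into the single phrase ``$q$ has more elements preceding it with respect to $\slo$ than does $p$,'' whereas you spell out both inclusions $\Left(p)\cup\set{p}\ci\Left(q)$ and $\Right(q)\cup\set{q}\ci\Right(p)$ and thereby extract the sharper gap $p_x \le q_x - 2$.
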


\begin{proof}
If $p \slo q$, then 
\[
p_x = \imbal(p, \slo) < \imbal(q, \slo) = q_x,
\]
since $q$ has more elements preceding it with respect 
to $\slo$ than does $p$.
\end{proof}

Several other types of canonical diagrams have been defined for planar semimodular lattices in 
G. Cz\'edli~\cite{gC17}.
An interesting uniqueness statement is  \cite[Proposition~5.14]{gC17}.


\section{Coverings in the left-right order}\label{S:left-right-coverings}

Now we proceed to examine some ramifications of the proof of Zilber's Theorem Plus in more detail, starting with 
the implication ``(i) implies (ii)''.

In a planar lattice diagram $L$, 
let us say that $p$ is 
\emph{immediately to the left of} $q$,
if~$p$ is covered by $q$ in the $\slo$ order, in formula,
$p \preclr q$. How can we characterize this
relation?

We can begin by applying Lemma~\ref{L:partition}, which gives us the schematic portrayal 
in Figure~\ref{F:double-partition}.  It is evident that $p \preclr q$ precisely when the space
strictly between $\RM(p)$ and $\LM(q)$  has no element of the diagram.
Observe that the two boundaries intersect 
at $p \jj q$ and $p \mm q$.  Note, however, that the maximal chains portrayed can
have additional, unrelated
intersections not shown; for example, all maximal chains in $L$ include 0 and 1.
Therefore in the following we restrict attention explicitly to $[p \mm q, p \jj q]$.

\begin{figure}[bht]
\centerline{\includegraphics{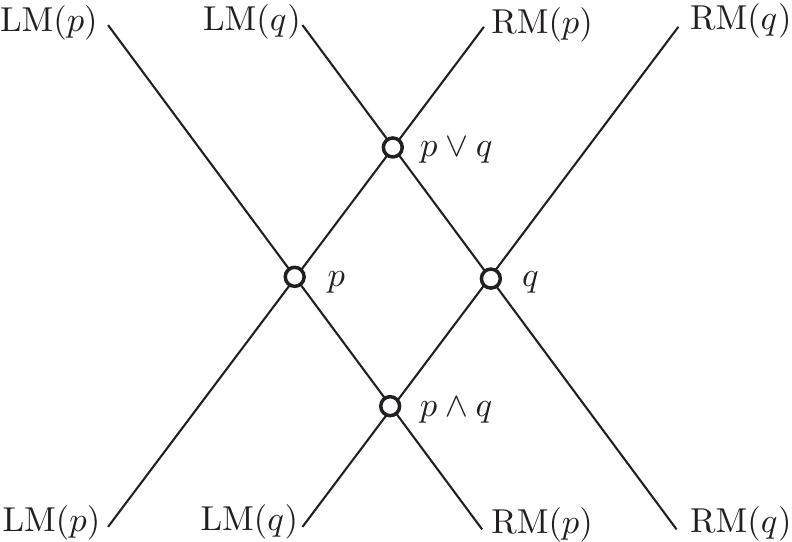}}
\caption{Partitions at $p$ and at $q$ together for $p \preclr q$} 
\label{F:double-partition}
\end{figure}

For incomparable elements $p$, $q$ 
of a planar lattice diagram $L$, 
let us say that the \emph{local region} 
determined by $p$ and $q$ is 
\[
   \LR(p,q) = 
   \setm{z \in [p \mm q, p \jj q]}
         {\RM(p) \leqx z \leqx \LM(q)}.
\]  
The \emph{interior} of such a local region is 
\[
   \Int(p,q) = \setm{z \in \LR(p,q)}
                    {\RM(p) \ltx z \ltx \LM(q)}.
\]
The \emph{left boundary} of such a region is $\LR(p,q) \cap \RM(p)$ and the
\emph{right boundary} is $\LR(p,q) \cap \LM(q)$.  

\begin{figure}[b]
\centerline{\scalebox{.8}{\includegraphics{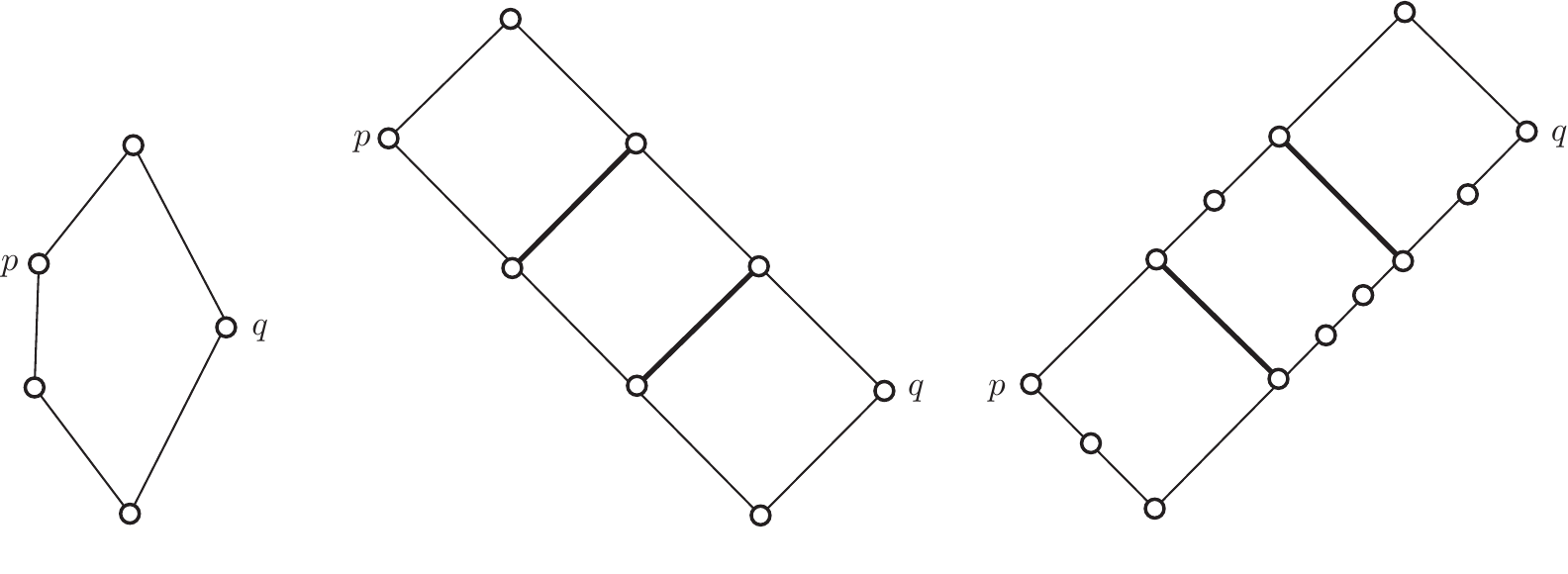}}}
\caption{A cell, a (leftward) ladder, and a (rightward) e-ladder from $p$ to $q$, as planar lattice diagrams} 
\label{F:ladders}
\end{figure}

\begin{figure}[b]
\centerline{\scalebox{.8}{\includegraphics{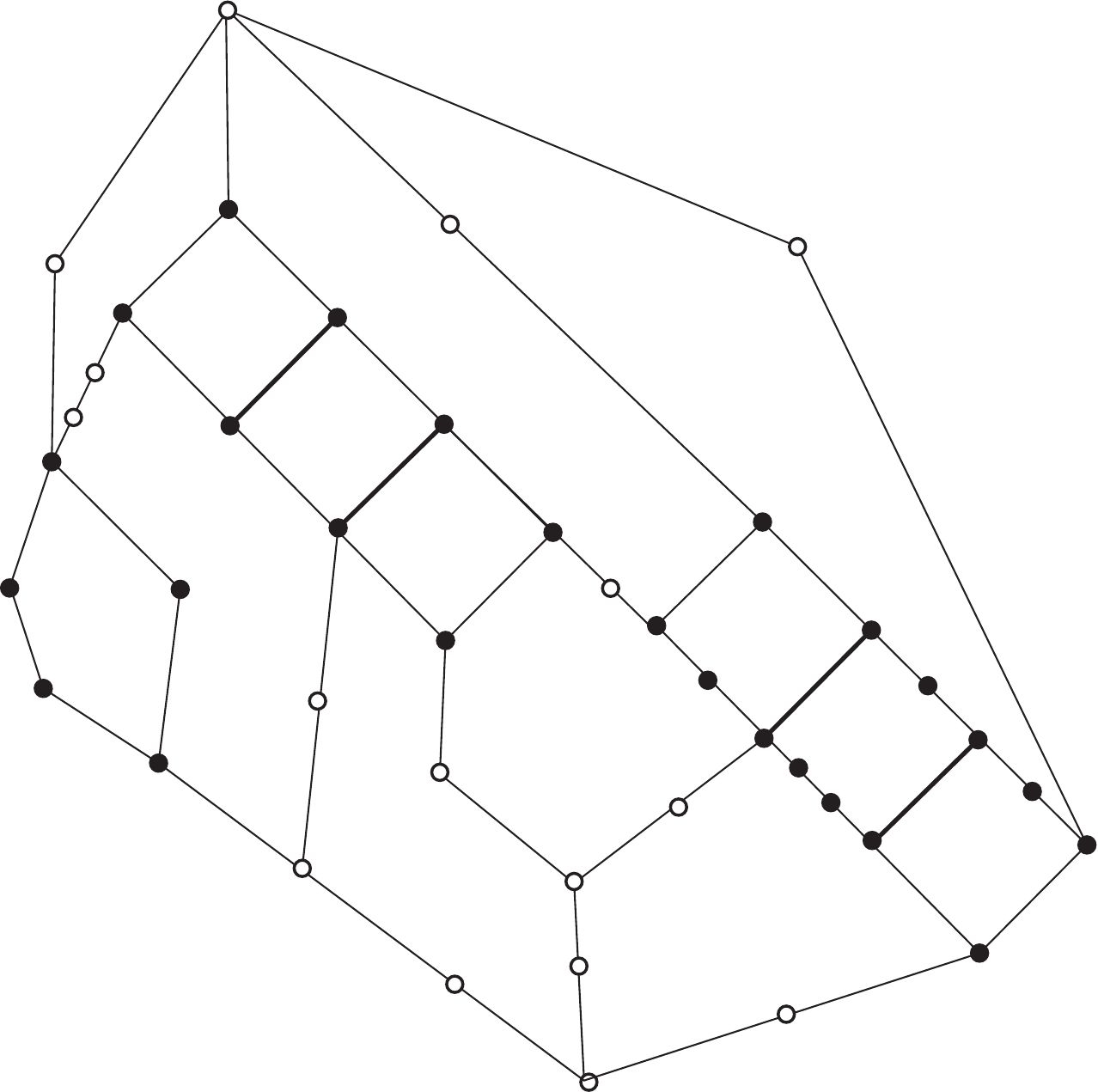}}}
\caption{A cell, a ladder, and an e-ladder as 
local regions in a planar lattice diagram} 
\label{F:laddersnew}
\end{figure}

In a planar lattice diagram $L$,
a~\emph{ladder} from $p$ to $q$ 
is a local region $\LR(p,q)$
isomorphic to $\SC 2 \times \SC n$ for some $n \geq 2$, 
where $\SC n = \set{0, 1, \dots, n-1}$  
(with the natural ordering).
More generally, an~\emph{e-ladder} 
(\emph{extended ladder}) from $p$ to $q$ 
is a local region $\LR(p,q)$ 
consisting of an isomorph of $\SC 2 \times \SC n$ 
together with possible additional boundary elements.  
Thus an e-ladder has empty interior.
A \emph{rung} of a ladder or e-ladder 
is a covering between two elements of opposite boundaries
(not including top or bottom elements).
In Figures~\ref{F:ladders} and~\ref{F:laddersnew} 
the rungs are indicated with bold lines.
We say that an e-ladder is \emph{leftward}, 
if its rungs have positive slant or
\emph{rightward} if its rungs have negative slant, 
as indicated in Figure~\ref{F:ladders}.  
A~\emph{cell} is an e-ladder with no rungs 
and is considered to be both leftward and rightward.

\begin{theorem} \label{T:lr-coverings}
For incomparable elements $p$, $q$ of a planar lattice diagram $L$, the following conditions are equivalent.
\noindent
\begin{enumeratea}
\item $p \preclr q$;
\item  $\Int(p,q) = \es$;
\item $\LR(p,q)$ is an e-ladder from $p$ to $q$.
\end{enumeratea}
\end{theorem}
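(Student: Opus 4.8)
The plan is to fix the orientation and route everything through Lemma~\ref{L:partition}. I assume throughout that $p \slo q$; this is forced by~(a), and it is the orientation for which $\LR(p,q)$ is the relevant local region (when $q \slo p$ one works instead with $\LR(q,p)$). By the two descriptions of the regions in Lemma~\ref{L:partition}, an element $z$ is strictly $\slo$-between $p$ and $q$, i.e.\ $p \slo z \slo q$, exactly when $z \in \Right(p) \cap \Left(q)$, which is the same as $\RM(p) \ltx z \ltx \LM(q)$. Hence $p \preclr q$ holds iff $\Right(p) \cap \Left(q) = \es$, while by definition $\Int(p,q) = \Right(p) \cap \Left(q) \cap [p \mm q, p \jj q]$. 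With this dictionary, the implication (a)$\Rightarrow$(b) is immediate (a point of $\Int(p,q)$ would be an intermediate element, contradicting the covering), and the reverse (b)$\Rightarrow$(a) reduces to the single geometric claim, the \emph{containment}
\[
   \Right(p) \cap \Left(q) \ci [p \mm q, p \jj q].
\]

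Establishing the containment is the heart of the matter, and I expect it to be the main obstacle; it is the one place where planarity and the continuity of path functions are used essentially. First I would show that $p \mm q$ and $p \jj q$ lie on both $\RM(p)$ and $\LM(q)$. For $p \jj q$ and $\RM(p)$: since $p \jj q \in \Mid(p)$ we have $p \jj q \leqx \RM(p)$, and if this were strict, a maximal chain through $q$ and $p \jj q$ would run from the right of $\RM(p)$ (at height $q_y$, as $q \gtx \RM(p)$) to its left (at height $(p \jj q)_y$); by continuity it must meet $\RM(p)$, and by planarity the meeting is at a common lattice element $w \in \RM(p)$ with $q \le w \le p \jj q$. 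Comparability of $w$ with $p$ forces $w \ge p$ (else $q \le w \le p$ makes $p,q$ comparable), so $w \ge p \jj q$, whence $w = p \jj q \in \RM(p)$, contradicting strictness. The other three incidences follow by symmetry and duality. Next, because $\RM(p)$ is the rightmost maximal chain through $p$, its portion above $p \jj q$ must be the rightmost chain from $p \jj q$ to $1$, while that of $\LM(q)$ is the leftmost such chain; thus above $p \jj q$ the two chains switch sides and leave no room strictly between them, and dually below $p \mm q$. Consequently any $z$ with $\RM(p) \ltx z \ltx \LM(q)$ satisfies $(p \mm q)_y \le z_y \le (p \jj q)_y$, and an analogous crossing argument then pins $z$ into $[p \mm q, p \jj q]$, proving the containment.

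Finally I would settle (b)$\Leftrightarrow$(c) by structural means. The direction (c)$\Rightarrow$(b) is immediate, as an e-ladder has empty interior by definition. For (b)$\Rightarrow$(c), empty interior means $\LR(p,q)$ equals the union of its two boundaries, $\LR(p,q) \cap \RM(p)$ and $\LR(p,q) \cap \LM(q)$, each a chain from $p \mm q$ to $p \jj q$ by the incidences just proved. A short comparability argument shows these chains meet \emph{only} at $p \mm q$ and $p \jj q$: a common element $w$ with $p \mm q < w < p \jj q$ would lie on both $\RM(p)$ and $\LM(q)$, hence be comparable to both $p$ and $q$, forcing $p$ and $q$ comparable. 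Two internally disjoint chains from $p \mm q$ to $p \jj q$ enclosing no further element of the diagram constitute an isomorph of $\SC 2 \times \SC n$ together with possible extra boundary elements, which is exactly an e-ladder from $p$ to $q$. This last step is routine poset bookkeeping, so the whole argument rests on the containment lemma of the second paragraph.
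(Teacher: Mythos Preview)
Your treatment of (a)$\Leftrightarrow$(b) is actually more careful than the paper's. The paper simply asserts, in the paragraph preceding the theorem, that ``it is evident that $p \preclr q$ precisely when the space strictly between $\RM(p)$ and $\LM(q)$ has no element of the diagram,'' and then in the proof writes ``we have already observed the equivalence of (a) and (b).'' Your containment argument $\Right(p)\cap\Left(q)\ci[p\mm q,\,p\jj q]$, together with the verification that $p\jj q$ and $p\mm q$ lie on both $\RM(p)$ and $\LM(q)$, fills in what the paper leaves implicit, and it is correct.

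However, your (b)$\Rightarrow$(c) has a genuine gap at the sentence ``this last step is routine poset bookkeeping.'' It is not; the paper devotes essentially its entire proof to precisely this implication. Having two internally disjoint boundary chains with empty interior does \emph{not} automatically yield an isomorph of $\SC 2 \times \SC n$ plus extra boundary points: one must show that all cross-boundary coverings (the rungs) go in the \emph{same} direction, and are consistently sequenced. Concretely, if $\ell \prec r$ is one rung (positive slope) and $\ell' \succ r'$ is another (negative slope), the resulting poset is not of the required form. The paper rules out mixed directions by first showing that a positive-slope rung forces $\ell < p$ and $r > q$ while a negative-slope rung forces $\ell' > p$ and $r' < q$, and then deriving a contradiction from $\ell \jj r' \leq \ell' \mm r$ together with the covering $\ell \prec r$. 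A further argument shows that the rungs are consistently ordered along the two boundaries. None of this is bookkeeping: it uses the specific positions of $p$ and $q$ on the two boundaries and the lattice operations in an essential way. In short, you have inverted the difficulty---the step you labor over the paper treats as evident, and the step you wave away is the paper's actual content.
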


\begin{proof}

We have already observed the equivalence of (a) and (b)  
and noted that (c) implies~(b). 
It remains to verify that (b) implies (c).  
Accordingly, assume that $\LR(p,q)$ has empty
interior, whence it is the union of its left and right boundary chains, which intersect only
at $p \mm q$ (the bottom) and $p \jj q$ (the top).  
Consider a covering between $\ell$ and $r$,
where $\ell$ is on the left boundary and $r$ on the right, neither one being the top or bottom.
We have either Case 1: $\ell \prec r$ (where $\ol{\ell r}$ has positive slope) 
or Case~2: $\ell \succ r$ (where $\ol{\ell r}$ has negative slope).

Let us assume Case 1.  
What configurations involving
$\ell, r, p, q$ are possible, 
taking into account that $\ell \sim p$ and $r \sim q$?  
If $p \leq \ell$
and $q \leq r$, then $p \leq \ell < r$ so $p \jj q \leq r$, whence $r$ is at the top, 
which is excluded.  
Similarly, we cannot have $p \geq \ell$ and $q \geq r$.  
If $p \leq \ell$ and $q \geq r$,
then $p \leq \ell < r \leq q$, 
contradicting $p \incomp q$.  
Therefore the only viable
possibility is that $p > \ell$ and $q < r$.
  
Similarly, Case 2 yields that $p < \ell$ and $q > r$.

Next, consider two coverings, 
$\ell$ with $r$ and $\ell'$ with $r'$.  
Could they be of opposite cases, say
$\ell \prec r$ and $\ell' \succ r'$?  
We claim not.  To have opposite cases would entail
$\ell < p < \ell'$ and $r > q > r'$ and then
$\ell, r'$ would be jointly below $\ell', r$, giving $\ell \leq \ell \jj r' \leq \ell' \mm r \leq r$;  then since
$\ell \prec r$, either (i) $\ell = \ell \jj r'$ or (ii) $\ell' \mm r = r$.  If~(i), then $r' \leq \ell < \ell'$ and
since $r' \prec \ell'$ we would have $r' = \ell$, 
which is not possible since $\ell$ and $r'$ are on opposite
boundaries.  Similarly, (ii) fails, proving the claim.
Thus all side-to-side coverings share the same case.

In the following assume that the shared case is Case 1.
Now observe that the coverings are consistently sequenced, 
in the sense that if
$\ell \prec r$ and $\ell' \prec r'$, 
then $\ell < \ell'$ is equivalent to $r < r'$; 
otherwise, say if $\ell < \ell'$ but $r \geq r'$, 
we would have $\ell < \ell' < r' \leq r$ 
in contradiction to $\ell \prec r$.  
Therefore we can collect the coverings 
as $\ell_j \prec r_j$, $j = 1,\ldots,m$, 
with $\ell_1 < \ell_2 < \cdots < \ell_m$ 
and $r_1 < r_2 < \cdots < r_m$, 
which is all that is needed to
show that $\LR(p,q)$ is an e-ladder 
with rungs $\ell_j \prec r_j$.
\end{proof}

\begin{corollary} \label{C:successive-ladders}
For $p, q$ in a planar lattice diagram $L$, 
the relation $p \slo q$ holds if{}f 
there is a sequence of elements 
$p_0 = p, p_1, \ldots, p_k = q$ in which,
for each two consecutive members $p_i, p_{i+1}$,
there is an e-ladder of $L$ from $p_i$ to $p_{i+1}$.
\end{corollary}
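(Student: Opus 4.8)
The plan is to recognize this corollary as the combination of two facts: first, Theorem~\ref{T:lr-coverings}, which for incomparable elements identifies the existence of an e-ladder from $p_i$ to $p_{i+1}$ with the covering relation $p_i \preclr p_{i+1}$ in the left-right order; and second, the standard principle that a strict order on a finite set is exactly the transitive closure of its covering relation. Since Theorem~\ref{T:complementary} establishes that $\slo$ is an order relation on the finite set $L$, these two facts together yield the result, once we check that $\slo$-coverings occur only between incomparable elements, so that Theorem~\ref{T:lr-coverings} is applicable at each link.

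First I would prove the forward direction. Assume $p \slo q$. Because $\slo$ is an order on the finite set $L$, I can choose a saturated chain $p = p_0 \preclr p_1 \preclr \cdots \preclr p_k = q$ in the $\slo$ order, with $k \geq 1$ since $p \neq q$. For each $i$, the relation $p_i \slo p_{i+1}$ forces $p_i \incomp p_{i+1}$ by the very definition of $\slo$; hence Theorem~\ref{T:lr-coverings} applies and converts the covering $p_i \preclr p_{i+1}$ into an e-ladder from $p_i$ to $p_{i+1}$. This produces the required sequence.

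For the converse, suppose a sequence $p_0 = p, \ldots, p_k = q$ with e-ladders between consecutive members is given. Each local region $\LR(p_i, p_{i+1})$ is defined only for incomparable $p_i, p_{i+1}$, so Theorem~\ref{T:lr-coverings} again applies and yields $p_i \preclr p_{i+1}$, whence $p_i \slo p_{i+1}$. Since an e-ladder forces $p_i \neq p_{i+1}$, we have $k \geq 1$, and transitivity of $\slo$ (Theorem~\ref{T:complementary}) then gives $p = p_0 \slo p_k = q$, as desired.

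The substantive content of the argument is entirely carried by Theorem~\ref{T:lr-coverings}, so I do not expect a genuine obstacle beyond the bookkeeping just described. The one point deserving explicit attention is that the covering steps of $\slo$ lie strictly between incomparable elements---an immediate consequence of condition~(i) in the definition of $\slo$---since this incomparability is precisely the hypothesis that makes the e-ladder characterization of Theorem~\ref{T:lr-coverings} available at every link of the chain.
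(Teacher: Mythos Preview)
Your proposal is correct and is exactly the intended argument: the paper states the result as an immediate corollary of Theorem~\ref{T:lr-coverings} without giving a separate proof, and your write-up simply spells out the natural deduction---factor $p \slo q$ into $\preclr$-coverings using finiteness, then translate each covering into an e-ladder via (a)~$\Leftrightarrow$~(c) of Theorem~\ref{T:lr-coverings}, and conversely use transitivity of $\slo$ from Theorem~\ref{T:complementary}. One small quibble: in the converse direction your sentence ``Since an e-ladder forces $p_i \neq p_{i+1}$, we have $k \geq 1$'' does not actually rule out the degenerate sequence with $k=0$ (there are then no e-ladders to inspect); the exclusion of $k=0$ is really a matter of reading the corollary's notation $p_0, p_1, \ldots, p_k$ as implicitly requiring $k \geq 1$, or equivalently of noting that the statement is meant only for $p \neq q$.
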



\section{Traversals of a planar lattice diagram}\label{S:traversals}

Another important aspect of Zilber's Theorem Plus
is ``(ii) implies (iii)''.
We~start by assuming a complementary order and consider the associated two total orders of Lemma~\ref{L:ii-to-iii}.
For each of these total orders, what is the actual sequence of coverings encountered as we traverse the lattice in the  direction of increasing order?

Let $L$ be a planar lattice diagram.
We can define the \emph{up-right total order}
on $L$ as the union of the order $<$ of $L$ 
and the complementary order $\slo$; 
by~complementarity, 
this union is a total order (B. Dushnik and E.\,W. Miller \cite[Lemma~3.51]{DM41} and Lemma~\ref{L:ii-to-iii} above).
The name reflects the fact that  as we traverse this order, 
we move either up via $<$ on $L$ 
or to the right via $\slo$. Equivalently, the \emph{up-right traversal} of~$L$ is the listing of~the elements 
of~$L$ in increasing up-right total order. 
(A priori, it is not clear that such a traversal always exists.)  The following theorem presents an algorithm for computing this traversal. For convenience, let us say that a covering segment for $p \prec q$ is a \emph{dangling edge} of $q$.

\begin{theorem} \label{T:ur-traversal}
Let $L$ be a planar lattice diagram.
\noindent
\begin{enumerate}
\item[\tup{(1)}] If for every element $q \in L$ 
that covers more than one element, we
delete all dangling edges of $q$ except the rightmost, then the remaining covering edges of $L$ 
form a directed spanning tree $\Tur$ of $L$, 
rooted at $0$.
\item[\tup{(2)}] The up-right traversal of $L$ is the standard depth-first traversal of the tree $\Tur$, going secondarily left to right. In other words, starting at $0$, we must
repeatedly look for the leftmost branch not yet visited, 
and follow it from its lowest unvisited element 
to its end.
\end{enumerate}
\end{theorem}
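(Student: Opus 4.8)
The plan is to prove the two parts separately: Part~(1) by a direct counting argument and Part~(2) by showing that one of the two total orders in question is contained in the other. For Part~(1), observe that after the deletion each $q \neq 0$ retains exactly one covering edge entering it from below, namely the one to its $\slo$-greatest (``rightmost'') lower cover $\pi(q)$, while the bottom $0$ retains none since it covers nothing. Because $\pi(q) \prec q$ forces $\pi(q)_y < q_y$ by the $y$-isotone property, iterating $\pi$ strictly lowers the $y$-coordinate and must terminate, necessarily at the unique minimal element $0$. Hence every element has a unique descending edge-path to $0$, so the retained edges form a spanning tree $\Tur$ rooted at~$0$ and directed upward. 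This part is routine.

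For Part~(2), write $<_{\textup{ur}}$ for the up-right total order $(<) \cup (\slo)$ and let $\sqsubset$ be the depth-first pre-order of $\Tur$, children taken left to right. Both are total orders on the finite set $L$, so it suffices to prove the containment $\sqsubset\ \subseteq\ (<_{\textup{ur}})$: if $p \sqsubset q$, then $p <_{\textup{ur}} q$. Two observations will be needed. First, by complementarity (Theorem~\ref{T:complementary}) distinct upper covers of a vertex are pairwise incomparable, hence $\slo$-comparable, so the children of each vertex are linearly ordered by $\slo$; this is the ``left-to-right'' order. Second, descending from any $p$ along $\RM(p)$ takes the $\slo$-rightmost lower cover at each step, since two chains through $p$ that diverge there cannot cross below $p$ by planarity and the rightmost choice yields the pointwise-largest path function; as this choice is exactly $\pi$, the tree-ancestors of $p$ are precisely the elements of $\RM(p)$ below $p$. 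In particular, if $p'$ is a tree-ancestor of $p$, then $p' \in \RM(p)$ and hence $\RM(p) \leq \RM(p')$ by maximality of $\RM(p')$.

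Now suppose $p \sqsubset q$ and let $a$ be their lowest common tree-ancestor. If $a = p$, then $p$ is a tree-ancestor of $q$, so $p < q$ and $p <_{\textup{ur}} q$. Otherwise the root-to-$p$ and root-to-$q$ paths branch at $a$ into distinct children $p' \slo q'$ with $p' \leq p$ and $q' \leq q$. From $p' \slo q'$, Lemma~\ref{L:partition} gives $q' \gtx \RM(p')$, and since $\RM(p) \leq \RM(p')$ this yields $q' \gtx \RM(p)$, that is, $p \slo q'$. It remains to lift this relation from $q'$ up to $q$: I must show $q \in \Right(p) \cup \Over(p)$, excluding the alternatives $q \leq p$ and $q \in \Left(p)$.

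The alternative $q \leq p$ is immediate, since $q' \leq q \leq p$ would contradict $q' \incomp p$. The alternative $q \in \Left(p)$ is the main obstacle, and is exactly where planarity (not merely the abstract complementary order) enters. Here $q \ltx \RM(p)$ (strictly left of the path) while $q' \gtx \RM(p)$ (strictly right), yet the tree-path from $q$ down to $q'$ is a saturated chain whose path function is, by $y$-isotonicity, continuous on $[q'_y, q_y]$, so by the intermediate value property it must meet the path of $\RM(p)$. By noninterference and planarity this meeting cannot be an interior crossing of two distinct covering segments, so it occurs at a common lattice vertex $z \in \RM(p)$ with $q' \leq z \leq q$. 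Since $z \in \RM(p)$ gives $z \sim p$, we get either $p \leq z \leq q$ or $q' \leq z \leq p$, each contradicting an incomparability. Thus $q \in \Right(p) \cup \Over(p)$, whence $p <_{\textup{ur}} q$; this establishes the containment and hence the equality of the two traversals.
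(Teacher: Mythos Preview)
Your proof is correct, and Part~(1) matches the paper's argument almost exactly. For Part~(2) the underlying idea is the same as the paper's---both hinge on the observation that the tree path descending from any $p$ coincides with $\RM(p)$ below $p$, so that the next DFS branch lies strictly to the right of $\RM(p)$ and hence in $\Right(p)$ by Lemma~\ref{L:partition}---but you take a longer route than necessary.

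The paper checks only \emph{consecutive} pairs in the DFS traversal: from $p$ we either move up to a child (so $p < p'$) or backtrack and jump to a new branch $p'$ that is strictly to the right of $\RM(p)$, whence $p \slo p'$. Since both $\sqsubset$ and $<_{\textup{ur}}$ are total orders on the same finite set, agreement on consecutive pairs forces equality. You instead verify $p \sqsubset q \Rightarrow p <_{\textup{ur}} q$ for \emph{arbitrary} pairs, which obliges you to lift the relation from the branching child $q'$ all the way up to its descendant $q$; this is what forces the intermediate-value/crossing argument ruling out $q \in \Left(p)$. That argument is sound, but it is extra work that the consecutive-pair approach avoids entirely. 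The payoff of your route is a self-contained verification that does not rely on the reader to supply the ``consecutive pairs suffice'' step; the payoff of the paper's route is brevity.
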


\begin{proof}
\emph{Ad} (1).  For each $p \in L$ with $p > 0$,
we have retained all the covering line segments 
needed to compute $\RM(p)$ 
from $p$ down to $0$. 
Then by the same token,
there is an up-directed path from $0$ to $p$. 
Therefore $\Tur$ spans $L$. 
Because each node of $\Tur$ other than $0$ 
has in-degree $1$, it follows that $\Tur$ is a tree.

\emph{Ad} (2). 
In the depth-first traversal of $\Tur$, at each node $p$, 
we either move up via $<$ or jump to a new branch 
starting with a $p'$ that is strictly to the right of $\RM(p)$, 
in which case we have $p \slo p'$
by Lemma~\ref{L:partition}.
\end{proof}

An example of the spanning tree for the up-right traversal is given in Figure~\ref{F:T-UR}. The elements are labeled sequentially from 0 in traversal order.

\begin{figure}[thb]
\centerline{\scalebox{1}{\includegraphics{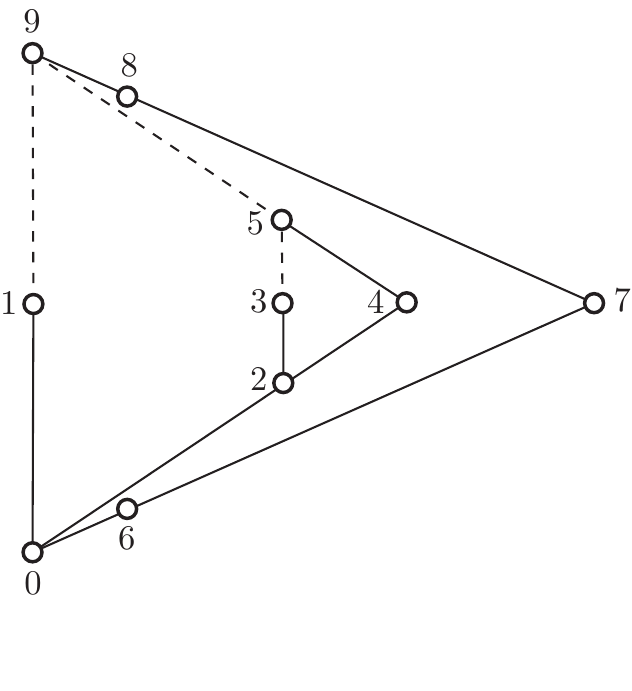}}}
\caption{The spanning tree $\Tur$}
\label{F:T-UR}
\end{figure}

\begin{figure}[ht]
\centerline{\scalebox{1}{\includegraphics{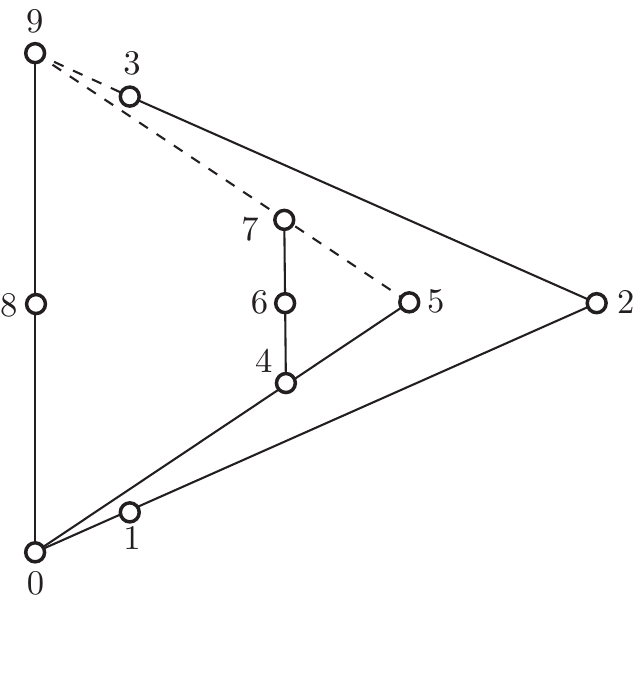}}}
\caption{The spanning tree $\Tul$}
\label{F:T-UL}
\end{figure}

Why does the up-right traversal
involve retaining only the \emph{rightmost} covering 
under each element $p \in L$,
rather than the leftmost?  
Since the up-right total order is consistent with $<$, we will not visit $p$ until all elements \emph{under} $p$ have already been visited, and that happens only when all elements \emph{covered} by $p$ have been visited, the rightmost of these being the last.

Similarly, the \emph{up-left total order} 
on $L$ is the union of the order~$<$ of $L$ and the \emph{dual} 
$\widetilde{\slo}$ of $\slo$.
By considering the vertical reflection of the lattice diagram $L$ and following the $\Tul$ construction,
we obtain a new spanning tree $\Tul$. 
Figure~\ref{F:T-UL} shows 
an example of the up-left traversal.

We also know from the definition of a complementary order that the intersection of the up-right total order and the up-left total order is the original order $<$ on $L$. 


\section{Other ways of defining the left-right order for elements}\label{S:left}
Working with planar lattices and their diagrams, as a rule
there is little doubt visually as to when one element is 
to the left of another element. A convenient mathematical definition was presented in Section~\ref{S:complementary}. Here we mention two alternative approaches. Naturally, both lead to the same left-right order $\slo$.

\subsection{The Kelly-Rival approach}\label{SS:K-R}
Kelly-Rival \cite{KR75} used as a starting point the fact that 
that for the covers of a single element $u$,   
the assignment of a left-to-right order is obvious;
for a cover $c$ of an element $u$, 
let $u \measuredangle c$ denote 
the angle the segment $\ol{uc}$ 
makes with the negative $x$-axis.
Then we can order the covers in sequence 
by these angles.
Note that this order does not necessarily correspond  
to a comparison of $x$-coordinates.

This order can be extended
to arbitrary $a \incomp b$ in $L$
by defining $a$ to be to the left of the element $b$
if $a \mm b$ has covers $a' \leq a$ and  $b' \leq b$ 
such that $a'$ is to the left of the element $b'$ 
as covers of $a \mm b$, 
as defined in the previous paragraph.
Our left-right definition generalizes the condition of
Proposition 1.6  in Kelly-Rival \cite{KR75}.

\subsection{An approach via sequences of e-ladders} \label{SS:ladders} 
Starting from cells as a basic structure, one might think that for $a, b$ in a lattice diagram $L$, 
the relation $a \slo b$ holds if{}f $a$ and~$b$ are connected by a sequence of cells; more specifically, $a = a_0,\ldots, a_k = b$, where for each $i$ with $0 < i \leq k$,  there is a cell $S$ with $a_{i-1}$ on the left boundary and~$a_i$ on the right boundary of $S$.  But any ladder with a rung is a counterexample; 
the two doubly irreducible elements are not so connected.  Instead, Corollary~\ref{C:successive-ladders} suggests that appropriate connections are not via cells alone but rather via e-ladders.

Setting aside the theory of $\slo$ for a moment, 
we can develop an equivalent concept by defining
the binary relation $p \eloi q$
in a planar lattice diagram~$L$ as follows. 
Let $p \eloi q$ if there is an e-ladder from $p$ to $q$.  
(Note that the definition of an e-ladder
in Section~\ref{S:left-right-coverings} 
does not depend on a knowledge of $\slo$.)
Define the relation~$\elo$ to be 
the transitive extension of $\eloi$.  
Now we can connect the two versions 
of left-right order by applying
Corollary~\ref{C:successive-ladders}, 
to obtain the following.

\begin{theorem}\label{T:identical}
In a planar lattice diagram $L$, 
the relations $\elo$ and $\slo$ are the same.
\end{theorem}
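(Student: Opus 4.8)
The plan is to prove that the two relations $\elo$ and $\slo$ coincide as sets of ordered pairs, by establishing mutual containment. The key tool is Corollary~\ref{C:successive-ladders}, which already characterizes $\slo$ in terms of chains of e-ladders. The relation $\eloi$ is defined so that $p \eloi q$ exactly when there is an e-ladder from $p$ to $q$, and $\elo$ is its transitive extension. So the two relations are built from the same atomic ingredient (the existence of an e-ladder from one element to the next), and the task reduces to checking that the atomic building block of $\eloi$ matches up with the single-step comparison underlying $\slo$, together with bookkeeping about transitive closure.

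First I would handle the containment $\elo \subseteq \slo$. Suppose $p \elo q$. By definition of the transitive extension, there is a sequence $p = p_0, p_1, \ldots, p_k = q$ with $p_{i-1} \eloi p_i$ for each $i$, meaning there is an e-ladder from $p_{i-1}$ to $p_i$. The forward direction of Corollary~\ref{C:successive-ladders} (that the existence of such a chain of e-ladders implies $p \slo q$) gives $p \slo q$ immediately. Conversely, for $\slo \subseteq \elo$, suppose $p \slo q$. The reverse direction of Corollary~\ref{C:successive-ladders} produces a sequence $p = p_0, \ldots, p_k = q$ where consecutive members are joined by an e-ladder. Each such step is precisely an instance of $p_{i-1} \eloi p_i$, and since $\elo$ is the transitive extension of $\eloi$, we conclude $p \elo q$.

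The one point that deserves care is whether $\eloi$ is genuinely the relation appearing inside Corollary~\ref{C:successive-ladders}, since that corollary speaks of an e-ladder ``from $p_i$ to $p_{i+1}$'' while $\eloi$ is defined via ``an e-ladder from $p$ to $q$''; these are verbatim the same condition, as the parenthetical remark in Section~\ref{SS:ladders} emphasizes that the definition of an e-ladder does not presuppose knowledge of $\slo$. I would also note explicitly that an e-ladder from $p$ to $q$ forces $p \incomp q$ (since $p$ and $q$ are the bottom-left and top-right corners of an $\SC2 \times \SC n$ isomorph and hence incomparable), so that every step is consistent with the incomparability built into both relations. Once this identification is made, the argument is essentially a restatement of the corollary in the language of $\eloi$ and its transitive closure.

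The main obstacle, such as it is, is not a deep one: it is confirming that the transitive extension $\elo$ matches the transitive closure implicitly taken in Corollary~\ref{C:successive-ladders}, and that no additional hypotheses (for instance, on whether the intermediate $p_i$ satisfy $p \slo p_i \slo q$) are needed in either direction. Since the corollary already packages both implications symmetrically in terms of e-ladder chains, the theorem follows with almost no further work — the real content was absorbed into Corollary~\ref{C:successive-ladders} and Theorem~\ref{T:lr-coverings}.
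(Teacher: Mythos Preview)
Your proposal is correct and follows exactly the paper's approach: the paper presents Theorem~\ref{T:identical} as an immediate consequence of Corollary~\ref{C:successive-ladders}, and your argument simply unpacks the two directions of that biconditional in terms of $\eloi$ and its transitive extension. One tiny quibble: $p$ and $q$ are not the ``corners'' of the $\SC2 \times \SC n$ isomorph but rather arbitrary elements on opposite side boundaries of $\LR(p,q)$; however, the incomparability $p \incomp q$ is already built into the definition of $\LR(p,q)$ (and hence of an e-ladder from $p$ to $q$), so your conclusion stands.
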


By utilizing this result, the property of being well drawn can be easily checked:

\begin{corollary} \label{C:elo-well-drawn}
A planar lattice diagram $L$ is well drawn if{}f 
for $p,q \in L$, the inequality
$p_x < q_x$ holds whenever $p \eloi q$.
\end{corollary}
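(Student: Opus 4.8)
The plan is to prove Corollary~\ref{C:elo-well-drawn} by reducing it directly to the definition of ``well drawn'' and then applying Theorem~\ref{T:identical}. Recall that a planar lattice diagram $L$ is well drawn precisely when the left-right $x$-isotone property holds: whenever $p \slo q$, we have $p_x < q_x$. Since Theorem~\ref{T:identical} tells us that the relations $\elo$ and $\slo$ coincide, the hypothesis ``$p_x < q_x$ whenever $p \slo q$'' is logically identical to ``$p_x < q_x$ whenever $p \elo q$.'' So the real content of the corollary is to show that this latter condition can be weakened from the transitive relation $\elo$ to its generating relation $\eloi$ (the existence of a single e-ladder from $p$ to $q$).

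The key observation driving the argument is that the property ``$p_x < q_x$'' is itself transitive, being inherited from the strict total order on the real numbers $\RR$. First I would handle the easy implication: if $L$ is well drawn, then $p \slo q$ implies $p_x < q_x$; and since $p \eloi q$ implies $p \elo q = \slo$, we immediately get $p_x < q_x$ whenever $p \eloi q$. For the converse, I would assume $p_x < q_x$ holds whenever $p \eloi q$ and aim to show the same implication for $\slo$. Given $p \slo q$, Theorem~\ref{T:identical} gives $p \elo q$, so by the definition of $\elo$ as the transitive extension of $\eloi$, there is a chain $p = p_0, p_1, \ldots, p_k = q$ with $p_{i-1} \eloi p_i$ for each $i$. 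Applying the hypothesis to each link yields $(p_{i-1})_x < (p_i)_x$, and transitivity of $<$ on $\RR$ telescopes these into $p_x < q_x$, as required.

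Once both implications are established, the condition ``$p_x < q_x$ whenever $p \eloi q$'' is equivalent to ``$p_x < q_x$ whenever $p \slo q$,'' which is exactly the definition of well drawn. The point of stating the corollary in terms of $\eloi$ rather than $\slo$ is practical: checking a single e-ladder is a local, finite inspection, whereas $\slo$ is defined via a global separating path function, so this reformulation makes the well-drawn property easy to verify.

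There is no substantial obstacle here; the proof is essentially a bookkeeping argument built on Theorem~\ref{T:identical} together with the transitivity of the real-number order. The only point requiring any care is recognizing that one must invoke Theorem~\ref{T:identical} in \emph{both} directions of the biconditional, and that the reduction from $\elo$ to $\eloi$ goes through because a real inequality chains across the intermediate elements $p_1, \ldots, p_{k-1}$ of the connecting sequence of e-ladders furnished by Corollary~\ref{C:successive-ladders}.
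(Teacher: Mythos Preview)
Your proof is correct and matches the paper's intended approach: the paper states the corollary without an explicit proof, simply indicating that it follows from Theorem~\ref{T:identical}, and your argument fills in exactly the expected details---one direction is immediate since $\eloi \subseteq \elo = \slo$, and the other chains the real inequalities along a $\eloi$-sequence furnished by the definition of $\elo$ as the transitive extension of $\eloi$.
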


Or equivalently:

\begin{corollary} \label{C:elo-well-drawn2}
A planar lattice diagram $L$ is well drawn if{}f 
all e-ladders in $L$ are well drawn.
\end{corollary}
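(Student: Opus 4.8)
The plan is to derive the corollary directly from Corollary~\ref{C:elo-well-drawn}, which has already reduced the well-drawnness of $L$ to the single condition that $p_x < q_x$ whenever $p \eloi q$. Thus it suffices to show that this condition is equivalent to the assertion that every e-ladder in $L$ is well drawn. The bridge between the two statements is the definition of $\eloi$ itself: $p \eloi q$ holds precisely when there is an e-ladder $E = \LR(p,q)$ from $p$ to $q$, and in such an $E$ the element $p$ lies on the left boundary $\LR(p,q) \cap \RM(p)$ and $q$ on the right boundary $\LR(p,q) \cap \LM(q)$, with $p \incomp q$.

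First I would record the structural fact that, when an e-ladder $E$ is regarded as a planar lattice diagram in its own right, two of its elements are incomparable exactly when they lie on opposite boundaries, since each boundary is a chain and the interior $\Int(p,q)$ is empty. Moreover, for $x$ on the left boundary and $y$ on the right boundary, the right boundary chain of $E$ separates them, so $x$ is to the left of $y$ in the left-right order of $E$; hence that internal left-right order coincides exactly with the left-boundary-to-right-boundary dichotomy. In particular the defining pair satisfies ``$p$ to the left of $q$ in $E$,'' so that ``$E$ is well drawn'' forces $p_x < q_x$.

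For the implication that all e-ladders being well drawn yields $L$ well drawn: given $p \eloi q$, take the e-ladder $E$ from $p$ to $q$; since $p$ is to the left of $q$ in $E$ and $E$ is well drawn, $p_x < q_x$, and Corollary~\ref{C:elo-well-drawn} then gives that $L$ is well drawn. For the converse, assume $L$ well drawn and let $E$ be any e-ladder from $a$ to $b$; for an incomparable pair with $x$ to the left of $y$ in $E$, I would show $x \eloi q$ by checking that $\LR(x,y)$ computed in $L$ is exactly the sub-region of $E$ lying in $[x \mm y, x \jj y]$, which inherits empty interior from $E$ and is therefore an e-ladder from $x$ to $y$. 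Then $x \eloi y$ gives $x \slo y$ by Theorem~\ref{T:identical}, and well-drawnness of $L$ yields $x_x < y_x$; alternatively one can invoke Theorem~\ref{T:lr-coverings} to identify $\eloi$ with $\preclr$ and shorten this step.

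The hard part will be the two pieces of order-theoretic bookkeeping about e-ladders: verifying that the left-right order internal to $E$ is precisely the boundary dichotomy, and verifying that a boundary-to-boundary incomparable pair of $E$ is itself an $\eloi$-pair of $L$ (equivalently, that $\LR(x,y)$ computed in the ambient $L$ agrees with the corresponding sub-region of $E$, so that no new interior elements appear). Both rest on the emptiness of the interior and the fact that the boundaries are chains, and once they are established the desired equivalence follows immediately by combining Corollary~\ref{C:elo-well-drawn}, Theorem~\ref{T:identical}, and Corollary~\ref{C:successive-ladders}.
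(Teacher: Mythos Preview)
Your plan is sound and leads to a correct proof, but it is considerably more elaborate than what the paper intends. The paper offers no separate argument at all: it introduces Corollary~\ref{C:elo-well-drawn2} with the words ``Or equivalently,'' treating it as an immediate restatement of Corollary~\ref{C:elo-well-drawn}. The implicit reasoning is simply that ``$p \eloi q$'' is synonymous with ``there is an e-ladder from $p$ to $q$,'' and that the left--right order internal to an e-ladder is exactly the left-boundary/right-boundary dichotomy, so checking $p_x < q_x$ for every $\eloi$-related pair is the same as checking well-drawnness of every e-ladder.

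Where you do more work than necessary is in the converse direction. You propose to show, for $x$ on the left boundary and $y$ on the right boundary of an e-ladder $E = \LR(a,b)$, that $x \eloi y$ holds in $L$ by verifying that $\LR(x,y)$ (computed in $L$) coincides with the appropriate sub-region of $E$. That bookkeeping is avoidable: one gets $x \slo y$ in $L$ directly from the definition of $\slo$, since membership in $\LR(a,b)$ already gives $x \leqx \LM(b)$, while $y \in \LM(b)$ gives $\LM(b) \leqx y$, so $C = \LM(b)$ (or symmetrically $C = \RM(a)$) serves as the separating maximal chain. Then well-drawnness of $L$ yields $x_x < y_x$ immediately, without ever needing $x \eloi y$ or any analysis of $\LR(x,y)$. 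With this shortcut the ``hard part'' you flag disappears, and the corollary really is a one-line reformulation, as the paper suggests.
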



\section{Ordered subsets and quotient lattices}\label{S:Quotient}

In this section, we prove some immediate consequences 
of Zilber's Theorem Plus.
The following result---in a slightly different form---can be
found in R. Nowakowski, I. Rival, and J. Urrutia \cite{NRU92},
published in 1992.

\begin{theorem}\label{T:planar}
Let $L$ be a planar lattice. 
Let $K$ be a finite lattice with an order embedding into $L$.
Then $K$ is a planar lattice.
\end{theorem}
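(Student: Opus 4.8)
The plan is to reduce the statement to condition~(iv) of Zilber's Theorem Plus (Theorem~\ref{T:Zilber-plus}), exploiting the fact that the Dushnik--Miller conditions there depend only on the underlying order and not on the lattice operations.

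First I would apply Theorem~\ref{T:Zilber-plus} to $L$. Since $L$ is planar, condition~(i) holds for $L$, and hence so does condition~(iv): there is an order embedding $\iota \colon L \to C_1 \times C_2$ of $L$ into the direct product of two chains $C_1$ and $C_2$. Next, writing $\varphi \colon K \to L$ for the given order embedding, I would observe that the composition $\iota \circ \varphi \colon K \to C_1 \times C_2$ is again an order embedding. Indeed, for $a, b \in K$ we have $a \leq b$ if{}f $\varphi(a) \leq \varphi(b)$ if{}f $\iota(\varphi(a)) \leq \iota(\varphi(b))$, so the composite both preserves and reflects the order.

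Finally, $K$ is a \emph{finite lattice} that admits an order embedding into the direct product of two chains; that is, $K$ satisfies condition~(iv) of Theorem~\ref{T:Zilber-plus}. The implication ``(iv) implies (i)'' of that theorem then shows that $K$ is planar, as required.

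The only subtlety---and the point I would flag as the main obstacle---is that $\varphi$ is merely an order embedding of ordered sets, not necessarily a lattice embedding, so the meets and joins of $K$ need bear no relation to those of $L$. This is precisely why I would route the argument through conditions~(iii)/(iv) of Zilber's Theorem Plus rather than through a planar diagram of $L$ directly: those conditions are purely order-theoretic and are manifestly inherited by any subposet. The hypothesis that $K$ is itself a lattice enters only at the last step, where it is needed to invoke the lattice-specific equivalence of Theorem~\ref{T:Zilber-plus}.
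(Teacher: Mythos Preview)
Your proof is correct and follows essentially the same route as the paper: the paper invokes condition~(iii) of Theorem~\ref{T:Zilber-plus} (order dimension at most~$2$) where you invoke condition~(iv), but both arguments amount to observing that a Dushnik--Miller condition is inherited under order embeddings and then applying Zilber's Theorem Plus in reverse. Your remark that only the order structure matters until the final step is exactly the point.
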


\begin{proof}
By Zilber's Theorem Plus, 
$L$ has order dimension at most $2$. 
Since $K$ has an order embedding into $L$, 
it follows that $K$ also has order dimension at most $2$.
Again, by Zilber's Theorem Plus, $K$ is planar. 
\end{proof}

\begin{theorem}\label{T:join-congruence}
Let $L$ be a planar lattice and let $\bgg$ 
be a join-congruence of $L$.
Then~$L/\bgg$ is also a planar lattice.
\end{theorem}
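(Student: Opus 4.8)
The plan is to reduce the statement to Theorem~\ref{T:planar}: since a join-congruence $\bgg$ of a finite lattice is in particular a congruence of the join-semilattice $(L, \jj)$, the quotient $L/\bgg$ is a finite join-semilattice with least element, hence a finite lattice. It therefore suffices to produce an order embedding of $L/\bgg$ into the planar lattice $L$, for then Theorem~\ref{T:planar} gives the planarity of $L/\bgg$ at once. Writing $[a]$ for the $\bgg$-block of $a$, the quotient carries the semilattice order $[a] \le [b]$ if{}f $a \jj b \bgg b$.

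The key step is to select a canonical representative of each block. First I would observe that each block is closed under $\jj$: if $x \bgg y$, then $x \jj y \bgg y \jj y = y$ by the join-congruence property, so $x \jj y$ again lies in the block of $y$. Being a finite join-closed set, each block $[a]$ thus has a greatest element $t_{[a]} = \bigvee [a] \in [a]$. I would then define $\varphi \colon L/\bgg \to L$ by $\varphi([a]) = t_{[a]}$ and show that it is an order embedding onto $T = \setm{t_{[a]}}{a \in L} \ci L$ with the induced order. If $[a] \le [b]$ in the quotient, i.e.\ $a \jj b \bgg b$, then $t_{[a]} \jj t_{[b]} \bgg a \jj b \bgg b \bgg t_{[b]}$; hence $t_{[a]} \jj t_{[b]}$ lies in the block of $b$ and dominates its top $t_{[b]}$, forcing $t_{[a]} \jj t_{[b]} = t_{[b]}$, that is, $t_{[a]} \le t_{[b]}$ in $L$. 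Conversely, $t_{[a]} \le t_{[b]}$ in $L$ gives $[a] = [t_{[a]}] \le [t_{[b]}] = [b]$ in the quotient. So $\varphi$ preserves and reflects order and is injective, the desired order embedding.

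With this embedding established, Theorem~\ref{T:planar} finishes the proof. The one point requiring care is the order-reflection half of the verification: one must not conflate the semilattice order on $L/\bgg$ with some a priori relation on representatives, and must check that replacing a block by its top neither introduces nor removes comparabilities. The rest is routine, and no appeal to the diagrammatic theory is needed beyond what is already packaged in Theorem~\ref{T:planar}.
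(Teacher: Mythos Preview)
Your proof is correct and follows essentially the same route as the paper: show that $L/\bgg$ is a lattice, observe that sending each $\bgg$-block to its top element gives an order embedding of $L/\bgg$ into $L$, and then invoke Theorem~\ref{T:planar}. The paper asserts the order-embedding step without proof (noting parenthetically that it is even join-preserving), whereas you supply the verification in detail; the arguments are otherwise identical.
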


\begin{proof} 
The quotient $L/\bgg$ is a finite join-semilattice with zero and is therefore a lattice.
Since the top elements of the $\bgg$-blocks provide an order embedding of~$L /\bgg$ into $L$, Theorem~\ref{T:join-congruence} applies.  (In fact, this embedding is known to be join-preserving.)
\end{proof}

By duality, we obtain the following statement.

\begin{corollary}\label{C:meet-congruence}
Let $L$ be a planar lattice and let $\bgg$ 
be a meet-congruence of $L$.
Then $L/\bgg$ is also a planar lattice.
\end{corollary}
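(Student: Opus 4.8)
The plan is to exploit the order-theoretic duality that interchanges meet and join, reducing the meet-congruence case to the join-congruence case already settled in Theorem~\ref{T:join-congruence}. The first thing I would establish is that planarity is a \emph{self-dual} property: if $L$ is planar, then so is its order dual $\dual{L}$. This is most cleanly seen through Zilber's Theorem Plus (Theorem~\ref{T:Zilber-plus}), since order dimension at most $2$ is manifestly self-dual --- if $<$ is the intersection $\tau_1 \cap \tau_2$ of two total orders, then the dual order is the intersection of the dual total orders $\tilde\tau_1$ and $\tilde\tau_2$. (Geometrically, the same fact follows by reflecting a planar diagram across a horizontal line, which reverses the $y$-isotone property while preserving planarity.)

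Next I would observe that a meet-congruence $\bgg$ of $L$ is precisely a join-congruence of $\dual{L}$, since join and meet are swapped under dualization, and that forming quotients commutes with dualization, so that $\dual{(L/\bgg)}$ is isomorphic to $\dual{L}/\bgg$ as lattices. With these two observations in hand, the argument is short: $\dual{L}$ is planar by self-duality; $\bgg$ is a join-congruence of $\dual{L}$, so Theorem~\ref{T:join-congruence} gives that $\dual{L}/\bgg$ is planar; hence $\dual{(L/\bgg)}$ is planar; and applying self-duality once more yields that $L/\bgg$ is planar.

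The routine verifications are the only real content here, and the one I would treat most carefully is the self-duality of planarity, since it is the hinge of the whole reduction. The remaining facts --- that a meet-congruence dualizes to a join-congruence, and that the dual of a quotient is the quotient of the dual --- are standard and I would simply record them. Alternatively, one could sidestep duality entirely and mirror the proof of Theorem~\ref{T:join-congruence} directly: $L/\bgg$ is a finite meet-semilattice with greatest element and hence a lattice, the bottom elements of the $\bgg$-blocks furnish an order embedding of $L/\bgg$ into $L$, and Theorem~\ref{T:planar} then delivers planarity.
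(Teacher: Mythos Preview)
Your proposal is correct and takes essentially the same approach as the paper, which simply states ``By duality, we obtain the following statement'' immediately before the corollary. You have spelled out that duality argument in full (self-duality of planarity via Theorem~\ref{T:Zilber-plus}, meet-congruences dualizing to join-congruences, and compatibility of quotients with dualization), and your alternative direct argument using bottom elements of $\bgg$-blocks is equally valid and mirrors the proof of Theorem~\ref{T:join-congruence} exactly.
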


And as a special case, we get the following.

\begin{corollary}\label{C:congruence}
The quotient lattice of a planar lattice by a lattice congruence
is also a planar lattice.
\end{corollary}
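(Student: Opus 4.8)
The plan is to obtain Corollary~\ref{C:congruence} as an immediate specialization of the two preceding results. First I would note that a \emph{lattice} congruence $\bgg$ on a lattice $L$ is by definition compatible with both lattice operations, and in particular with the join operation; hence $\bgg$ is a join-congruence in the sense required by Theorem~\ref{T:join-congruence}. Next I would observe that the quotient $L/\bgg$ formed as a lattice coincides, as an ordered set, with the quotient formed merely as a join-semilattice: in either case the elements are the $\bgg$-blocks and the induced order is the same, so the two constructions pose the same planarity question. With this identification in hand, Theorem~\ref{T:join-congruence} applies directly and yields that $L/\bgg$ is planar.

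Since no additional hypotheses need to be verified, I do not anticipate any genuine obstacle; the only point worth a word is the identification just mentioned, namely that passing to the lattice quotient and to the join-semilattice quotient yields the same ordered set. By duality one could argue symmetrically: a lattice congruence is also a meet-congruence, so Corollary~\ref{C:meet-congruence} applies just as well, and the two routes differ only by the vertical reflection used to pass between join- and meet-congruences.
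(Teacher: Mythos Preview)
Your proposal is correct and matches the paper's approach: the paper simply presents Corollary~\ref{C:congruence} as an immediate special case of the preceding results, with no further argument, and your observation that a lattice congruence is in particular a join-congruence (or dually a meet-congruence) is exactly the intended justification. The extra remark about the join-semilattice quotient and the lattice quotient coinciding as ordered sets is a reasonable clarification but not something the paper spells out.
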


\end{document}